\numberwithin{equation}{section}
\newtheoremstyle{fancy1}{10pt}{10pt}{\itshape}{12pt}{\textsc\bgroup}{.\egroup}{8pt}{ }
\newtheoremstyle{fancy2}{10pt}{10pt}{}{12pt}{\itshape}{.}{8pt}{ }
\theoremstyle{fancy1}
\newtheorem{thm}{Theorem}[section]
\newtheorem{cor}[thm]{Corollary}
\newtheorem{prop}[thm]{Proposition}
\newtheorem{lem}[thm]{Lemma}
\newtheorem{main}{Theorem}
\newtheorem*{cor*}{Corollary}
\theoremstyle{fancy2}
\newtheorem{defn}[thm]{Definition}
\theoremstyle{remark}
\newtheorem{rem}[thm]{Remark}
\newtheorem{rems}[thm]{Remarks}
\newcommand{\R}{\mathbb{R}}
\newcommand{\Z}{\mathbb{Z}}
\newcommand{\mb}[1]{\mathbb{#1}}
\newcommand{\vv}[1]{v_{#1}}
\newcommand{\uu}{u}
\newcommand{\bfun}{\eta}
\newcommand{\scal}{\mathrm{Scal}}
\newcommand{\orthvect}{e}
\newcommand{\kgam}{k_{\gamma}}
\newcommand{\hess}{\mathcal{H}}
\newcommand{\mixedfactor}[1]{g_{f_{#1} f_{#1+1}}}
\newcommand{\mixedfactorF}[1]{F_{#1, #1+1}}
\newcommand{\leaf}[1]{\mathcal{F}_{p_{#1}}}
\newcommand{\leaff}{\mathcal{F}}
\begin{document}

\title{On the geometry of conullity two manifolds}
\author{Jacob Van Hook}

\begin{abstract}
We consider complete locally irreducible conullity two Riemannian manifolds with constant scalar curvature along nullity geodesics. 
There exists a naturally defined open dense subset on which we describe the metric in terms of several functions which are uniquely determined up to isometry.
In addition, we show that the fundamental group is either trivial or infinite cyclic. 
\end{abstract}
\maketitle



A manifold \(M^{n+2}\) has \emph{conullity two} if at every point \(p \in M\) we have a codimension two nullity distribution 
\begin{equation*}
	\Gamma_p \coloneqq \{T \in T_p M : R(T, X, Y, Z) = 0 \text{ for any }X, Y, Z \in T_p M\}.
\end{equation*} 
In other words, \(\Gamma_p\) is the kernel of the curvature tensor. 
A vector \(T \in \Gamma_p\) is called a \emph{nullity vector} and a geodesic \( \gamma \) is called a \emph{nullity geodesic} if \( \gamma'(t) \) is a nullity vector for all \( t \).
It is well known that if \( M \) is complete then the nullity geodesics are complete as well.

In \cite{Brooks:2021}, Brooks investigated complete 3-manifolds with constant Ricci eigenvalues \( (\lambda, \lambda, 0) \).
Equivalently, these manifolds are conullity 2 and have constant scalar curvature. 
We apply his techniques to study complete conullity two \((n+2)\)-manifolds \(M\) and relax the assumption of constant scalar curvature and instead only assume that it is constant along nullity geodesics. 

A key tool we use is the \emph{splitting tensor} \( C \colon \Gamma \times \Gamma^\perp \to \Gamma^\perp \) 
\begin{equation*}
	C(T, X) = C_T X \coloneqq -\left( \nabla_X T \right)^{\Gamma^\perp}.
\end{equation*}
The tensor \( C_T \) is nilpotent for every \( T \) if and only if the scalar curvature is constant along nullity geodesics, and if this is the case, then the open subset where \( C \neq 0 \), henceforth written as \( M_C \), is foliated by complete flat hypersurfaces which extends to a Lipschitz foliation \( \mathcal{F} \) on the closure of \( M_C \). 
Hence through every point of \( \bar{M}_C \) there is a \( C^{1,1} \) arc length parametrized curve \( \gamma \) orthogonal to the foliation and we consider the open subset \( M_F \subset M_C \) where \( \gamma \) has a smooth fully-defined Frenet frame, see definition \ref{defn:mf}.
If we assume that the metric is locally irreducible, then \( M_F \) is an open dense subset of \( M \), i.e. \( M = \bar{M}_C = \bar{M}_F \).
We then have the following result about the metric on connected components of \( M_F \):
\begin{main}\label{thm:themetriconaV}
	Suppose that \(M^{n+2}\) is a complete, simply connected, locally irreducible manifold with conullity 2 and whose scalar curvature is constant along nullity geodesics.
	Then on a connected component \(V\) of \(M_F\) there exist smooth coordinates
	 \begin{equation*}
	 	(x,\uu,\vv{1}, \dots, \vv{n}) \in (c_1, c_2) \times \R^{n+1}
	 \end{equation*}
	 and the metric has the form
	\begin{equation}\label{eq:bmetric}
		g_{\bfun, f_i} = \bfun(x,u)^2 \dd x^2 + \sum_{j=0}^{n} \bigg(\dd \vv{j} + \Big( \vv{j-1} f_{j}(x) - \vv{j+1} f_{j+1}(x) \Big) \dd x\bigg)^2 \tag{\( \star \)}
	\end{equation}
	for some functions \(f_1, \dots, f_{n} \colon (c_1, c_2) \to \R\) and \(\bfun \colon (c_1, c_2) \times \R \to \R\).
\end{main}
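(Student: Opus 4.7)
The plan is to build adapted coordinates on $V$ by propagating a Frenet-type frame of a distinguished orthogonal curve parallelly across the flat leaves of $\leaff$, then reading off the metric in these coordinates.

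First, fix $p_0 \in V$ and let $\gamma_0$ be the orthogonal curve through $p_0$. Since $V \subset M_F$, $\gamma_0$ is smooth and admits a smooth Frenet frame $\{T, E_1, \dots, E_{n+1}\}$ satisfying $\nabla_T E_j = -f_j E_{j-1} + f_{j+1} E_{j+1}$ (with $E_0 = T$ and the usual boundary conventions), where the $f_j$'s are the Frenet curvatures. Using the 2-dimensional structure of $\Gamma^\perp$ and the nilpotence of $C$, one identifies $E_1$ as the unit direction in $\Gamma^\perp$ tangent to the leaf through $\gamma_0$, so that $E_1, \dots, E_{n+1}$ are all leaf-tangent (with $E_2, \dots, E_{n+1} \in \Gamma$); one also checks that at most $f_1, \dots, f_n$ are nontrivial, matching the theorem statement.

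Second, set up coordinates. For each parameter value, parallel-transport $E_1, \dots, E_{n+1}$ across the corresponding leaf; since the leaves are complete and flat, combining with the leaf exponential map based at $\gamma_0$ gives coordinates $(\uu, \vv{1}, \dots, \vv{n}) \in \R^{n+1}$ on each leaf with $\partial_{\uu}$ equal to the parallel extension of $E_1$ and $\partial_{\vv{j}}$ equal to the parallel extension of $E_{j+1}$. Parametrize $\gamma_0$ by $x \in (c_1, c_2)$ and glue the leaf coordinates over $x$ — using simple-connectedness of $M$ and the extended Lipschitz foliation $\leaff$ on $\bar M_C$ — to obtain global coordinates $(x, \uu, \vv{1}, \dots, \vv{n}) \in (c_1, c_2) \times \R^{n+1}$ on $V$. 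Orthonormality of the parallel frame on each leaf immediately gives $g(\partial_{\uu}, \partial_{\uu}) = g(\partial_{\vv{j}}, \partial_{\vv{k}}) = \delta_{jk}$, and the cross terms $g(\partial_x, \partial_{\vv{j}})$ are obtained by differentiating the leaf parametrization in $x$: the rotation of the frame along $\gamma_0$ governed by the Frenet equations produces precisely $\vv{j-1} f_j - \vv{j+1} f_{j+1}$ at the coordinate point (with $\vv{0} = \uu$ and $\vv{-1} = \vv{n+1} = 0$). Then $g(\partial_x, \partial_x) - \sum_j g(\partial_x, \partial_{\vv{j}})^2 =: \bfun^2$ defines $\bfun$.

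The main obstacle is proving that $\bfun$ depends only on $(x, \uu)$ and not on $(\vv{1}, \dots, \vv{n})$. This is a rigidity statement forced by the conullity-2 structure: since $E_2, \dots, E_{n+1}$ span $\Gamma$, moving in the $\vv{j}$ directions slides along nullity geodesics within each leaf, and one must show the length of $\partial_x$ transverse to the leaf is invariant under such slides. Establishing this requires the nilpotence of $C_T$ (equivalent to the scalar-curvature hypothesis) together with the 2-dimensional structure of $\Gamma^\perp$. A secondary obstacle is extending the coordinates globally on $V$ rather than only in a tubular neighborhood of $\gamma_0$, which should follow from simple-connectedness of $M$ and completeness of the leaves via a monodromy argument.
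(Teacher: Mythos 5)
Your overall strategy is the same as the paper's: take the orthogonal curve \( \gamma \) through a point of \( V \), use its Frenet frame, and define coordinates by \( (x,\uu,\vv{1},\dots,\vv{n}) \mapsto \exp_{\gamma(x)}(\uu e_1 + \vv{i}T_i) \), with injectivity coming from the fact that \( \gamma \) meets each leaf exactly once (the paper invokes Proposition \ref{prop:curvehitsonce} for this rather than a monodromy argument). The problem is that you stop exactly at the heart of the proof: you explicitly label as ``the main obstacle'' the claim that \( \bfun \) depends only on \( (x,\uu) \), and you offer no mechanism to establish it beyond saying it should follow from nilpotence of \( C_T \) and \( \dim \Gamma^\perp = 2 \). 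The same applies to your cross-term formula \( \vv{j-1}f_j - \vv{j+1}f_{j+1} \), which you assert by ``differentiating the leaf parametrization'' as if the frame rotation alone determined \( \partial_x\phi \); since differentiating in \( x \) moves transversally to the leaves, this is not a leaf-intrinsic computation and needs justification. The missing idea, which is how the paper closes both gaps at once, is to view \( \partial_x\phi \) as the Jacobi field \( J \) along the leaf geodesic \( t \mapsto \exp_{\gamma(x)}(t(\uu e_1 + \vv{i}T_i)) \) coming from the variation \( \alpha_s(t) = \phi(x+s,t\uu,t\vv{i}) \), with \( J(0)=e_2 \) and \( J'(0)=\nabla_{e_2}(\uu e_1+\vv{i}T_i) \) given by the Frenet equations. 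Because the \( T_i \) are nullity directions, the curvature term in the Jacobi equation reduces to \( \uu^2\sec(e_1,e_2) \) acting only on the \( e_2 \)-component; hence the \( e_1 \)- and \( T_i \)-components of \( J \) are linear in \( t \) (yielding exactly the stated cross terms), while the \( e_2 \)-component solves \( f'' = -\uu^2\sec(e_1,e_2)f \). Since \( \scal \) is constant along nullity geodesics, \( \sec(e_1,e_2) \) along this geodesic depends only on \( (x,t\uu) \), and the initial data \( f(0)=1 \), \( f'(0)=-\uu\,\kgam(x) \) are independent of the \( \vv{i} \); therefore \( f(t)=\bfun(x,t\uu) \) with \( \bfun \) a function of \( (x,\uu) \) alone. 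Without this (or an equivalent) argument, the proposal does not prove the theorem.

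A secondary slip: your bookkeeping of the Frenet curvatures is off. A curve in \( M^{n+2} \) has up to \( n+1 \) curvatures; on \( M_F \) the last \( n \) of them (the paper's \( a_1,\dots,a_n \)) are all nonzero and become the \( f_1,\dots,f_n \) of \eqref{eq:bmetric}, while the first curvature (the geodesic curvature \( \beta=\kgam \)) is generally nonzero as well and does not appear among the \( f_j \); it enters the metric only through \( \bfun \), via \( \bfun(x,0)=1 \), \( \bfun_{\uu}(x,0)=-\kgam(x) \) and the Jacobi ODE \( \bfun_{\uu\uu}+\tfrac{1}{2}\scal\,\bfun=0 \). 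Your claim that ``at most \( f_1,\dots,f_n \) are nontrivial'' is therefore not correct as stated, and your plan never accounts for where the geodesic curvature of \( \gamma \) goes.
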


For simplicity in notation we assume in \eqref{eq:bmetric} that \(\vv{0} = \uu\), \(\vv{-1} = \vv{n+1} = 0\), and \(f_0 = f_{n+1} = 0\). 
We will use this convention throughout the paper.
Furthermore, \( f_1(x) = \abs{C_T(\gamma(x))} \) (up to sign) and \( \bfun_{\uu \uu} + \scal \cdot \bfun = 0 \) with \( \bfun(x,0) = 1 \) and \( \bfun_x(x,0) = \kgam(x) \), where \( \kgam \) is the geodesic curvature of \( \gamma(x) = (x,0,\dots, 0) \).
The leaves of the foliation \( \mathcal{F} \) are given by the level sets of \( x \).
The functions \( \bfun \) and \( f_i \) are, after a choice of base point, isometry invariants of the metric. 

We now discuss a method to construct smooth examples where \( M_F \neq M \) and where the foliation \( \mathcal{F} \) is smooth only on an open dense subset.
For this, start with a complete simply connected surface \( \Sigma \) with \( \scal \leq -2 \). 
Given a Lipschitz function \( H \) with Lipschitz constant 1, we will show that there exists a curve \( \gamma: \R \to \Sigma \) which is \( C^{1,1} \) such that the turning angle of \( \gamma' \) is equal to \( H \).
Thus the geodesic curvature of \( \gamma \) is \( H' \), defined almost everywhere.
We will then show that the geodesics orthogonal to \( \gamma \) foliate \( \Sigma \) and hence \( (x,\uu) \mapsto \exp_{\gamma(x)}(\uu X) \) defines Lipschitz coordinates on \( \Sigma \) where \( X \) is a unit vector orthogonal to \( \gamma' \). 
In these coordinates the metric has the form \( \bfun(x,\uu)^2 \dd{x}^2 + \dd{\uu}^2 \). 

We may then ask for conditions on the functions \( f_i \) and \( \bfun \) such that the resulting metric is guaranteed to be smooth on \( \bar{M}_F = M \). 
A sufficient condition is given by the following result:
\begin{main}\label{thm:smoothnessresultinintro}
	Let \( \Sigma \) be a complete, simply connected surface with \( \scal \leq -2 \). Let \(f_1, \dots, f_n \colon \R \to \R\) be smooth functions, and \(H \colon \R \to \R\) Lipschitz with Lipschitz constant 1 which is smooth on an open dense set \(S \subset \R\).
	Define a metric on \( M = \Sigma \times \R^n = \R^{n+2} \) which has the form \eqref{eq:bmetric} on \( (S \times \R) \times \R^n \).
	If, as \( x \to \R \setminus S \), each \( f_1, \dots, f_n \) vanishes to infinite order and every product 
	\begin{equation}\label{eq:smoothnesscondition}
		f_i^{(k)}(x) \prod_{j} \frac{\partial^{a_j + b_j} \bfun}{\partial x^{a_j} \partial \uu^{b_j}}(x, \uu) \to 0, \quad \uu \in \R, \, k \geq 0, \, a_j + b_j > 0, \tag{\( \dagger \)} 
	\end{equation}
	does as well, then the metric extends smoothly to all of \( M \).
	Furthermore, the metric is complete with conullity 2 and is locally irreducible if no \( f_i \) vanishes on an open set. 
\end{main}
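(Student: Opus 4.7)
The plan is to treat the four conclusions (smooth extension, completeness, conullity two, and local irreducibility) in turn; only the first requires serious analysis. The metric $(\star)$ already defines a continuous Riemannian tensor on all of $M = \R^{n+2}$, since in the Lipschitz chart $(x, u, v_1, \ldots, v_n)$ the function $\eta$ is continuous and the twist coefficients are polynomial in $v$ with smooth-in-$x$ coefficients $f_j$, and it is smooth on $(S \times \R) \times \R^n$ by construction. To verify smoothness near a point $p$ with $x(p) \in \R \setminus S$, I would work in smooth coordinates $(y^1, y^2, v_1, \ldots, v_n)$ on $M = \Sigma \times \R^n$ obtained from smooth coordinates on $\Sigma$. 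In these coordinates the intrinsic part $\eta^2 \, dx^2 + du^2$ pulls back to the smooth surface metric $h_{\alpha\beta}(y)$, while the twist part of $(\star)$ becomes a finite polynomial combination of the $\alpha_j(x(y), v)$ together with the Jacobian entries $\partial x / \partial y^\alpha$ and $\partial u / \partial y^\alpha$ of the Lipschitz transition $\Psi \colon (x,u) \mapsto \exp_{\gamma(x)}(u X(x))$.

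The core analytic step is to show that every mixed $y$-partial of these twist terms extends continuously (and in fact vanishingly) to the points with $x \in \R \setminus S$. Expanding an arbitrary mixed partial via Fa\`a di Bruno and the Leibniz rule produces a finite sum of summands, each containing a single factor $f_i^{(k)}(x(y))$ multiplied by higher $(x,u)$-partials of the inverse chart $y \mapsto (x(y), u(y))$ and by polynomials in $v$. Because $\Psi$ is determined by the exponential map on the smooth surface $\Sigma$ together with the $C^{1,1}$ frame $X$ along the $C^{1,1}$ curve $\gamma$, its higher $(x,u)$-partials are expressible as polynomial combinations of $\eta$ and its $(x,u)$-partials, and dually for the inverse chart. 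Translating each summand back to $(x,u)$ coordinates therefore reduces the needed estimate to exactly the shape appearing in $(\dagger)$: one factor $f_i^{(k)}$ multiplied by finitely many positive-order partials of $\eta$. The assumption that each $f_i$ vanishes to infinite order at $\R \setminus S$ together with $(\dagger)$ sends every such summand to zero, so all partial derivatives of the metric components exist and are continuous on all of $M$, whence the metric is smooth. This bookkeeping---verifying that the derivatives of $\Psi$ appearing in the chain rule really do reduce to the $\eta$-partials that $(\dagger)$ controls---is the main obstacle.

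Completeness then follows from the observation that the projection $\pi \colon (x, u, v) \mapsto (x, u)$ onto $\Sigma$ is distance nonincreasing and has complete Euclidean $\R^n$ fibers, so a Cauchy sequence has Cauchy image in the complete surface $\Sigma$ and Cauchy vertical components in $\R^n$. Conullity two is a direct curvature computation from $(\star)$ in the orthonormal coframe $\eta \, dx,\ dv_j + \alpha_j \, dx$: the Cartan structure equations show that every nonzero component of the curvature involves $\eta \, dx$ as one of its legs, the hypothesis $\scal \leq -2$ rules out flatness in the $(x,u)$-direction, and the kernel of the resulting curvature operator has the expected codimension two, matching the construction in Theorem~A. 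Finally, for local irreducibility under the hypothesis that no $f_i$ vanishes on an open set, I would use the splitting tensor $C$ from the excerpt: any local de~Rham splitting of an open $U \subset M$ supplies a parallel proper subspace of $\Gamma^\perp|_U$ invariant under $C$, but the iterated action of $C$ cycles through the $v_j$-directions with amplitudes governed by $f_1, f_2, \ldots, f_n$ in sequence, so the existence of such an invariant subspace on $U$ forces some $f_i$ to vanish on $U$, contradicting the hypothesis.
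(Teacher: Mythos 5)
Your smoothness argument is in the same spirit as the paper's, but the reduction you assert is not quite the one that works, and the step you yourself flag as ``the main obstacle'' hides the real issue. The quantities you must differentiate through the Lipschitz transition $\Psi(x,u)=\exp_{\gamma(x)}(uX)$ are \emph{rational}, not polynomial, in $\eta$: the inverse Fermi chart, the unit field $e_2=\tfrac{1}{\eta}\partial_x$, and $\beta=-\eta_u/\eta$ all carry negative powers of $\eta$, and condition \eqref{eq:smoothnesscondition} says nothing about $1/\eta$. The missing ingredient --- and precisely where the hypotheses $\scal\le-2$, $\eta(x,0)=1$, and Lipschitz constant $1$ (so $\abs{k_\gamma}\le1$) are actually used --- is the Jacobi/Rauch comparison bound $\eta(x,u)\ge\cosh u+k_\gamma(x)\sinh u\ge e^{-\abs{u}}$, which keeps every denominator bounded away from zero on sets where $u$ is bounded; only then does each chain-rule summand become (bounded factor)$\times f_i^{(k)}(x)\times$(positive-order $\eta$-partials), so that \eqref{eq:smoothnesscondition} together with infinite-order vanishing of the $f_i$ sends it to zero. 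The paper also sidesteps your Fa\`a di Bruno bookkeeping entirely: it writes $g=g_{\Sigma\times\R^n}+\sum_i g_{f_i}+\sum_i g_{f_if_{i+1}}$, where the correction terms are tensors supported on $S_M$, and checks smoothness by evaluating them on smooth vector fields, observing that these scalars and all their derivatives are rational in $u,v_i$, derivatives of $f_i$ and $\eta$, and inner products with $e_2,\partial_{v_i}$, with each numerator term carrying some $f_i^{(j)}$ and each denominator a power of $\eta$ controlled by the comparison bound. Your route could be made to work, but as written the key estimate is both misstated (``polynomial'') and unproved.

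The completeness paragraph contains a genuine error: the projection $(x,u,v)\mapsto(x,u)$ is \emph{not} distance nonincreasing for \eqref{eq:bmetric}. For a velocity with $\dot v_j=-\alpha_j\dot x$ for $j\ge1$ and $\dot u=v_1f_1(x)\dot x$, the $g$-speed is $\eta\abs{\dot x}$ while the projected speed is $\sqrt{\eta^2+v_1^2f_1(x)^2}\,\abs{\dot x}$, which is strictly larger whenever $v_1f_1\neq0$; likewise ``Cauchy vertical components'' does not follow merely from the fibers being flat. The paper instead uses the orthogonal change of variables $\theta=S(x)V$ with $S'=SA$, $S\in SO(n+1)$, which turns \eqref{eq:bmetric} into $\eta^2\dd{x}^2+\abs{\dd\theta}^2$; a finite-length path then has bounded $\theta$, hence bounded $u$ and $v_i$, and the $x$-coordinate is bounded using the lower bound on $\eta$ (equivalently, completeness of $\Sigma$ once $u$ is known to be bounded). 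Finally, conullity two and local irreducibility are exactly the paper's Theorem 4.1 (in particular part (g)); your sketch conflates the splitting tensor $C$, which acts only on the two-plane $\Gamma^\perp$, with the operator that actually cycles through the $T_i$-directions, namely iterated $\nabla_{e_2}$ applied after one first forces $e_1,e_2$ into the parallel subdistribution via $R(e_1,e_2)$.
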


Here the foliation \( \mathcal{F} \) is smooth on \( (S \times \R) \times \R^n \), but only Lipschitz on \( M \) since \( \bfun(x,\uu) \) is smooth in the \( \uu \) variable but only Lipschitz in the \( x \) variable. 
In particular, \( \bfun \) and its derivatives are not necessarily continuous or even defined for \( x \in \R \setminus S \).
Using Theorem \ref{thm:smoothnessresultinintro}, one easily constructs examples where \( S \) is the complement of a Cantor set in \( \R \).

The strategy of our proofs uses the methods, and some of the results, in \cite{Brooks:2021}. 
The splitting tensor \( C \) defines a basis \( e_1 \) \( e_2 \) which is Lipschitz on \( M \).
In dimension 3 this defines a Lipschitz basis \( e_1 \), \( e_2 \), \( e_3 \) and the coordinates in \eqref{eq:bmetric} are Lipschitz on all of \( M \). 
This is no longer true if \( \dim M > 3 \).
We first find a smooth basis on \( M_F \) which extends \( e_1 \) and \( e_2 \) and we denote these new vectors by \( T_1, \dots, T_n \). 
For \( p \in M_F \) they are defined as the Frenet frame of the curve \( \gamma \) orthogonal to \( \mathcal{F} \) with \( \gamma(0) = p \).
In other words \( \nabla_{\gamma'} T_i = -a_i T_{i-1} + a_{i+1} T_{i+1} \) with \( a_i > 0 \).
This requires that \( \gamma \) is sufficiently regular.
\( M_F \) is the set of points where all \( a_i \neq 0 \).
However, for \( n > 3 \), the frame is not necessarily continuous. 
In fact, one finds such examples using Theorem \ref{thm:smoothnessresultinintro}.
We then derive a number of properties of a metric of the form \eqref{eq:bmetric} and use the exponential map to find an isometry from it to any conullity two manifold. 

If \( M \) is complete, we will see that \( \scal < 0 \). 
A special case is when \(\scal_M = -2 \), which is equivalent to assuming that \( M \) is curvature homogeneous.
This implies that \( \bfun(x,\uu) = \cosh(\uu) + h(x)\sinh(\uu) \) with \( h(x) = \kgam(x) \) and both Theorem \ref{thm:themetriconaV} and Theorem \ref{thm:smoothnessresultinintro} are then a generalization of the results in \cite{Brooks:2021} to higher dimensions.
If the metric is analytic then \( M_F = M \) and hence Theorem \ref{thm:themetriconaV} is a classification of all analytic conullity 2 metrics with \( \scal \) constant along nullity geodesics. 

Since conullity 2 and \( \scal < 0 \) implies \( \sec \leq 0 \), any simply connected example is diffeomorphic to \( \R^{n+2} \).
To study the topology in general we thus are interested in the fundamental group. 
Focusing on the locally irreducible case, we have the following result:
\begin{main}\label{thm:fundgroup}
	Let \(M\) be a complete conullity 2 manifold which is locally irreducible.
	If, furthermore, \( \scal\) is constant along nullity geodesics, then the fundamental group of \(M\) is either trivial or \(\Z\).
\end{main}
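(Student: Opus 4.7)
The plan is to lift to the universal cover, extract a homomorphism $\rho : \pi_1(M) \to (\R,+)$ from the induced action on the leaf space of $\leaff$, and show it is injective with discrete image. I would begin by passing to the universal cover $p : \tilde M \to M$, which is simply connected, complete, locally irreducible, conullity two, and has scalar curvature constant along nullity geodesics, so Theorem \ref{thm:themetriconaV} applies on every connected component of $\tilde M_F$. The group $\pi_1(M)$ acts on $\tilde M$ freely and properly discontinuously by deck isometries; each such isometry preserves the nullity distribution $\Gamma$, the splitting tensor $C$, the foliation $\leaff$, and --- since $\tilde M$ is simply connected --- the Lipschitz unit transverse vector field $N = \gamma'$ with its chosen orientation.

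Since $|N|=1$ and $\tilde M$ is complete, the flow $\Phi_t$ of $N$ is defined on all of $\R \times \tilde M$. I would argue that the leaf space $\tilde M / \leaff$ is Hausdorff and homeomorphic to $\R$: each leaf is simply connected (being complete flat, lifted to the simply connected cover $\tilde M$), so the foliation is equivalent to a locally trivial fibration over a one-manifold, which must be $\R$ rather than $S^1$ by simple connectedness of $\tilde M$. Fixing $p_0 \in \tilde M_F$ and letting $L_0$ be the leaf through $p_0$, I obtain a continuous surjection $x : \tilde M \to \R$ with $x(L_0)=0$ and $x(\Phi_t(q)) = x(q)+t$. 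Any $\phi \in \pi_1(M)$ satisfies $\phi_* N = N$, so it commutes with $\Phi_t$ and descends to a translation $\bar\phi(x) = x + c_\phi$, yielding $\rho : \pi_1(M) \to (\R,+)$. Proper discontinuity of the deck action together with proper embeddedness of the leaves then forces $\rho(\pi_1(M))$ to be a discrete subgroup of $\R$, hence trivial or infinite cyclic.

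The main obstacle is establishing $\ker(\rho) = \{\mathrm{id}\}$. Suppose $\phi \in \ker(\rho)$; then $\phi$ fixes every leaf of $\leaff$ setwise, so $\phi(p_0) \in L_0$ and the transverse curve $\phi \circ \gamma$ has the same $x$-parametrization as $\gamma$. On the connected component $V \subset \tilde M_F$ containing $p_0$, the Frenet frame $T_1, T_2, \ldots$ along $\gamma$ is smooth with $\nabla_{\gamma'} T_i = -a_i T_{i-1} + a_{i+1} T_{i+1}$ and $a_{i+1} > 0$; combining $\phi_* \gamma' = \gamma'$ with the fact that $\phi$ preserves the Levi-Civita connection, an induction on the Frenet relations yields $\phi_* T_i = T_i$ along $\gamma$. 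Appealing to the uniqueness part of Theorem \ref{thm:themetriconaV} --- the data $\bfun, f_1, \ldots, f_n$ are isometry invariants, and local irreducibility excludes any nontrivial translational symmetry of \eqref{eq:bmetric} along the leaves (a translation $\vv{j} \mapsto \vv{j} + a_j$ is an isometry of \eqref{eq:bmetric} only when each $a_{j-1} f_j - a_{j+1} f_{j+1}$ vanishes, which on $\tilde M_F$ propagates $a_0 = 0$ to $a_j = 0$ for every $j$) --- I would then force $\phi(p_0) = p_0$. But $\phi$ fixing $p_0$ with trivial differential there implies $\phi = \mathrm{id}$ by the rigidity of isometries of the Hadamard manifold $\tilde M$. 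Injectivity of $\rho$ follows, and so $\pi_1(M) \cong \rho(\pi_1(M))$ is either trivial or $\Z$.
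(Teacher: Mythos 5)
Your overall shape (leaf space $\cong \R$, induced action of $\pi_1(M)$ on it, kernel trivial, image discrete) matches the paper's, but two essential steps are not correct as stated. First, the homomorphism $\rho$ does not exist the way you build it: the flow of the unit normal $e_2$ does \emph{not} carry leaves to leaves. In the coordinates of Theorem \ref{thm:themetriconaV} one has $e_2=\tfrac{1}{\bfun}\big(\partial_x-\cdots\big)$ with $\bfun$ depending on $\uu$, so points of a leaf $\{x=x_0\}$ move at different $x$-speeds and $x(\Phi_t(q))=x(q)+t$ fails; equivalently $[e_2,e_1]=a_1T_1-\beta e_2$ is not tangent to $\leaff$, i.e.\ the leaves are totally geodesic but not equidistant. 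So from the leaf space you only get an action by homeomorphisms of $\R$ (and a deck transformation may even reverse the co-orientation), not by translations, and no homomorphism to $(\R,+)$. This is exactly why the paper introduces the isometry-invariant transverse quantity $A(p,q)=\int\abs{a_1\langle\gamma',e_2\rangle}\dd{t}$ of Lemma \ref{lem:Adef} (in coordinates the transverse measure $\abs{f_1(x)}\dd{x}$): it turns the leaf space into a subset of $\R$ on which the deck group acts by isometries of the absolute-value metric, and only after the fixed-point-freeness of Lemmas \ref{lem:fix2leaf} and \ref{lem:fix1leaf} (proved via the Soul theorem and the flat strip theorem) do these become translations. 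Your kernel argument would also need repair: an element fixing every leaf maps $\gamma$ to a \emph{different} integral curve of $e_2$, may flip the signs of $e_1,T_i$, and acts on each leaf by a Euclidean isometry, whereas your computation only rules out pure translations $\vv{j}\mapsto\vv{j}+c_j$; matching the invariant $a_1=f_1/\bfun$ does not by itself force the $\uu$-coordinate of $\phi(p_0)$ to vanish, since the convex function $\bfun(x_0,\cdot)$ can take the value $1$ at two points.

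The more serious gap is the sentence asserting that proper discontinuity of the deck action on $\tilde M$, plus properness of the leaves, ``forces $\rho(\pi_1(M))$ to be a discrete subgroup of $\R$.'' That is precisely the crux of the theorem and it is not automatic: a deck transformation can a priori move every leaf by an arbitrarily small transverse amount while translating a long distance \emph{along} the leaves, so the action upstairs can be properly discontinuous while the induced action on the leaf space has dense orbits. The bulk of the paper's proof is devoted to excluding exactly this: assuming a non-properly-discontinuous (hence dense-orbit) action on $\R$, one first shows all $a_i>0$ so the coordinates of Theorem \ref{thm:themetriconaV} are global, and then proves that the $\uu$- and $\vv{i}$-coordinates of the points $g_k^{-1}(q_k)$ are bounded --- using convexity of $\bfun(0,\cdot)$ for the $\uu$-coordinate, and the identities $T_i(e_2^m(a_j))=a_m\cdots a_1a_j\beta$ (after shifting the base point so that $\beta(q_k)\neq 0$) for the $\vv{i}$-coordinates --- which puts infinitely many orbit points of $p_0$ in a compact set and contradicts proper discontinuity on $\tilde M$. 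Nothing in your proposal substitutes for this step, so as written the argument does not go through.
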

One may construct examples where \( \pi_1(M) = \Z \) by choosing the \( f_i(x) \) and \( \bfun(x,\uu) \) to be nonvanishing and periodic in \( x \) with the same period.
One may also generalize the results in \cite{Brooks:2021} if \( M \neq \bar{M}_C \), i.e. the metric is allowed to be locally reducible on regions of \( M \), but we will not pursue this direction here.

One easily sees that the splitting tensor \( C_T \) is either nilpotent or has complex eigenvalues. 
The former case is studied in this paper.
Conullity two manifolds with \( C_T \) having complex eigenvalues seem to be more complicated.
In \cite{Szabo:1985} it was shown that all such locally irreducible and complete examples must be three dimensional and he constructed many local examples.
To our knowledge, the only known complete examples are submanifolds in Euclidean space, see \cite{Takagi:1972}, \cite{Dajczer:2001}, and \cite{Boeckx:1996}.

An outline of the paper is as follows.
We begin with some preliminary results in section \ref{sec:prelim}, and in section \ref{sec:remainingnullity} we construct a special nullity frame.
In section \ref{sec:metric} we prove properties of the metric \eqref{eq:bmetric} which leads us to prove Theorem \ref{thm:themetriconaV} in section \ref{sec:onmanifold}.
We then investigate foliations on surfaces in section \ref{sec:foliations} and use the results there to prove Theorem \ref{thm:smoothnessresultinintro}.
Finally in section \ref{sec:fungroup} we investigate the topological properties of these manifolds and prove Theorem \ref{thm:fundgroup}.

I would like to thank my Ph.D. thesis advisor, Dr. Wolfgang Ziller, for his guidance and support throughout every step of this project.


\section{Preliminaries}\label{sec:prelim}
In this section we summarize some properties of our metrics by generalizing and adapting the proofs of \cite{Brooks:2021} to dimension \( n > 3 \).

One should note that the curvature tensor of a conullity two manifold is determined by the scalar curvature. 
Indeed, at each point the curvature tensor is like that of a product \(\Sigma \times \R^n\) where \(\Sigma\) is a surface.
If \( M \) is complete we will show that \( \scal < 0 \).

The nullity distribution \(\Gamma\) is everywhere \(n\)-dimensional by assumption.
It is known to be completely integrable with totally geodesic flat leaves which are complete if \( M \) is (see \cite{Maltz:1972}).

Recall the \emph{splitting tensor} \(C \colon \Gamma \times \Gamma^\perp \to \Gamma^\perp\) is defined as
\begin{equation}
C(T, X) = C_T X \coloneqq -\left( \nabla_X T \right)^{\Gamma^\perp}.
\end{equation}
We begin with some facts about this tensor, see \cite{Florit:2016} and \cite{Brooks:2021}.

Fix some nullity geodesic \(\gamma(t)\). 
If \(T = \gamma'\), then in some parallel basis along \( \gamma \), \( C_T \) satisfies the Riccati equation \({C_T}'(t) = C_T^2(t)\) whose solutions are \(C_T(t) = C_0(I - t C_0)^{-1}\) for some initial condition \(C_0 = C_T(0)\). 
Any real eigenvalues of \(C_T\) must therefore be 0 as otherwise the splitting tensor would blow up in finite time. 
As \(C_T\) is a linear map between two dimensional vector spaces, it must be nilpotent or it has two complex eigenvalues.

The second Bianchi identity implies that
\begin{equation*}\label{eq:scalprime}
\scal' = \tr(C_T) \scal.
\end{equation*}
along a nullity geodesic. 
If the scalar curvature is constant along nullity geodesics we have \(\tr(C_T) = 0\). 
Furthermore, \((\tr C_T)' = \tr C_T^2 = (\tr C_T)^2 - 2 \det C_T \) and therefore \(\det C_T = 0\) as well, so \( C_T\) is nilpotent.
Conversely, if \( C_T \) is nilpotent, then \( \scal \) is constant along the nullity geodesic. 
From now on we will assume that \( C_T \) is nilpotent. 

The space of self-adjoint \(2 \times 2\) matrices is a 3-dimensional subspace of the 4-dimensional space of \( 2 \times 2 \) matrices. 
This subspace intersects the space of nilpotent matrices trivially and so the image of \(C: T \mapsto C_T\) is 1-dimensional. 
Therefore if \(C \neq 0\) then at each point \(p \in M\) there is a unique (up to sign) \(T_1 \in \Gamma\) such that \(C_{T_1} \neq 0\), and for any nullity vector \(T \perp T_1\), \(C_T = 0\).
Since \(C_{T_1}\) is nilpotent, there exists an orthonormal basis \(\{e_1, e_2\}\) of \(\Gamma^\perp\), parallel in the \( T_1 \) direction and unique up to sign, such that with respect to that basis \( C_{T_1} \) has the form
\begin{equation*}
C_{T_1} = \begin{pmatrix} 0 & a_1 \\ 0 & 0 \end{pmatrix}
\end{equation*}
for some smooth function \(a_1\).
That is,
\begin{equation*}
\left(\nabla_{e_1} T_1 \right)^{\Gamma^\perp} = 0, \quad \left(\nabla_{e_2} T_1 \right)^{\Gamma^\perp} = -a_1 e_1, \qand (\nabla_{e_i} T)^{\Gamma^\perp} = 0 
\end{equation*}
for all \( T \in \Gamma \) orthogonal to \( T_1 \). 
Furthermore, since \({C_{T_1}}' = C_{T_1}^2 = 0\), it follows that \(T_1(a_1) = 0\).

\begin{defn}
Let \(M_C \subset M\) be the set of points on which \(C\) (equivalently \(a_1\)) is nonzero.
\end{defn}

We first investigate properties of the splitting tensor on \(M_C\), so for the rest of this section suppose that \(a_1 \neq 0\). 
There exist smooth functions \(\alpha\) and \(\beta\) such that: 
\begin{align*}
\nabla_{e_1}e_1 &= \alpha e_2; \quad \nabla_{e_1}e_2 = -\alpha e_1; \\ 
\nabla_{e_2} e_1 &= a_1 T_1 - \beta e_2; \quad \nabla_{e_2} e_2 = \beta e_1, \nonumber
\end{align*}
which follow from the definition of \( T_1 \) and the parallel orthonormal frame \( \{e_1, e_2\} \) along the nullity geodesic.  

\begin{prop}\label{prop:initialcurvcomps}
Using the above notation, on \(M_C\) we have:
\begin{enumerate}[label = (\alph*)]
\item
\(\alpha = 0\), so \(\nabla_{e_1} e_1 = 0\);
\item
\(T(a_1) = T(\beta) = 0\) and \(\nabla_T T_1 = \nabla_T e_1 = \nabla_T e_2 = 0\) for any nullity vector \(T \in \Gamma\);
\item
\(\nabla_{e_1} T_1 = 0\), \(e_1(a_1) = a_1 \beta\), and \(e_1(\beta) - \beta^2 = R(e_1, e_2, e_2, e_1) = \frac{1}{2}\scal\).
\end{enumerate}
\end{prop}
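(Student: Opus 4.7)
The plan is to derive (a)--(c) from the vanishing $R(T, X) Y = 0$ for $T\in\Gamma$, combined with the fact that $\nabla_T$ preserves the splitting $TM = \Gamma \oplus \Gamma^\perp$ for $T\in\Gamma$ (a consequence of $\Gamma$ being totally geodesic). Writing $X = \nabla_{e_1} T_1$ and $Y = (\nabla_{e_2} T_1)^\Gamma$, so that $X, Y \in \Gamma \cap T_1^\perp$ (since $T_1$ is unit), I compute $R(e_1, e_2) T_1 = 0$ from the stated connection formulas and $[e_1, e_2] = -\alpha e_1 - a_1 T_1 + \beta e_2$. Since $C_X = C_Y = 0$, the terms $\nabla_{e_i} X$ and $\nabla_{e_i} Y$ stay in $\Gamma$, and the $\Gamma^\perp$-components of $R(e_1, e_2) T_1$ yield $-a_1 \alpha = 0$ and $a_1 \beta - e_1(a_1) = 0$; this proves (a) and one identity in (c).

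For (b), the key identity $(\nabla_T C)(T', X) = C_{T'}(C_T X)$, obtained by expanding $R(T, X) T' = 0$ and using preservation of $\Gamma^\perp$ under $\nabla_T$, gives $(\nabla_T C) \equiv 0$ whenever $T \perp T_1$ in $\Gamma$. Hence $C$ is parallel along such $T$-geodesics. Since the kernel of $T' \mapsto C_{T'}$, and hence its orthogonal complement (the $T_1$-line), is preserved under parallel transport, we get $\nabla_T T_1 = 0$; the analogous argument on the image of $C_{T_1}$ gives $\nabla_T e_1 = 0$, and preservation of orthogonality gives $\nabla_T e_2 = 0$. Comparing $C_{T_1}(e_2) = a_1 e_1$ at the two endpoints forces $T(a_1) = 0$. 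The $T = T_1$ case is handled by the parallel-frame construction along the nullity geodesic and the Riccati equation $C_{T_1}' = C_{T_1}^2 = 0$. Finally $T(\beta) = \langle \nabla_T \nabla_{e_2} e_2, e_1 \rangle$; using $R(T, e_2) e_2 = 0$ this reduces to $\langle \nabla_{[T, e_2]} e_2, e_1 \rangle$, which vanishes because $[T, e_2] \in \Gamma$ (modulo an $a_1 e_1$ term killed by $\nabla_{e_1} e_2 = 0$ when $T = T_1$) and $\nabla_S e_2 = 0$ for every $S \in \Gamma$.

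The remaining parts of (c) come from $R(e_1, e_2) e_1$. Its $e_2$-coefficient works out to $\beta^2 - e_1(\beta)$, so $e_1(\beta) - \beta^2 = R(e_1, e_2, e_2, e_1)$; the conullity 2 hypothesis implies the Riemann tensor is that of a surface factor, so $R(e_1, e_2, e_2, e_1) = \frac{1}{2}\scal$. The $\Gamma$-component of $R(e_1, e_2) e_1$ simplifies (after using (a) and the formulas above) to $a_1 X = a_1 \nabla_{e_1} T_1$, while $\langle R(e_1, e_2) e_1, T_j \rangle = R(e_1, T_j, e_1, e_2) = 0$ for every nullity $T_j$. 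Thus $X \in \Gamma$ is orthogonal to all of $\Gamma$, giving $\nabla_{e_1} T_1 = 0$. The main obstacle is precisely this last step: in dimension three $\Gamma \cap T_1^\perp = 0$ makes $\nabla_{e_1} T_1 = 0$ automatic, but in higher dimensions one must genuinely extract the nullity components of $R(e_1, e_2) e_1$, which are invisible to $R(e_1, e_2) T_1$ alone.
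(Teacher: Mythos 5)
Your proposal is correct in outline and, for (a) and (b), takes a genuinely different route from the paper. The paper argues component-by-component from specific curvature expansions: \(R(e_1,e_2)e_2\) and \(R(e_2,e_1)e_1\) give (a) and (c), and (b) is read off from \(R(T,e_2)e_1\), \(R(e_1,T)T_1\) and \(R(T_1,e_2)e_1\). You instead extract \(\alpha=0\) and \(e_1(a_1)=a_1\beta\) from the \(\Gamma^\perp\)-part of \(R(e_1,e_2)T_1=0\) (correct, since \(\nabla_{e_i}X,\nabla_{e_i}Y\in\Gamma\) because \(C_X=C_Y=0\)), and you obtain (b) from the general splitting-tensor identity \((\nabla_TC)(T',X)=C_{T'}(C_TX)\) together with preservation of the kernel and image of \(C\); this is more structural and explains conceptually why \(T_1,e_1,e_2,a_1,\beta\) are constant along nullity directions (it is exactly \(\nabla_TC=0\) when \(C_T=0\)). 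Your derivation of \(\nabla_{e_1}T_1=0\) and \(e_1(\beta)-\beta^2=\tfrac12\scal\) from the \(\Gamma\)- and \(e_2\)-components of \(R(e_1,e_2)e_1\) coincides with the paper's, and your closing remark correctly identifies why this step is where \(\dim>3\) differs.

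Two points need tightening, one of them a real (though small) gap. First, the case \(T=T_1\) of (b) is under-argued: the parallel-frame construction gives \(\nabla_{T_1}e_1=\nabla_{T_1}e_2=0\), and your \(T(\beta)\) computation does cover \(T=T_1\), but neither the frame construction nor the Riccati equation yields \(\nabla_{T_1}T_1=0\); moreover, invoking \(C_{T_1}'=C_{T_1}^2\) along integral curves of \(T_1\) presupposes that these curves are nullity geodesics, i.e. the very identity \(\nabla_{T_1}T_1=0\) you still owe (the paper avoids this by getting \(T_1(a_1)=T_1(\beta)=0\) and \(\nabla_{T_1}T_1=0\) directly from \(R(T_1,e_2)e_1=0\)). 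The fix stays inside your framework: apply your identity with \(T=T_1\) and \(T'\) a local section of the kernel \(K=\Gamma\cap T_1^\perp\); since \(C_{T'}\equiv0\), both sides reduce to \(-C_{\nabla_{T_1}T'}X=0\), so \(\nabla_{T_1}T'\in K\), whence \(\langle\nabla_{T_1}T_1,T'\rangle=-\langle T_1,\nabla_{T_1}T'\rangle=0\) and \(\nabla_{T_1}T_1=0\); the same identity with \(T'=T_1\), \(X=e_2\) then gives \(T_1(a_1)=0\) without any appeal to the Riccati flow. Second, phrase the kernel/image step pointwise rather than via parallel transport: \(\nabla_TC=0\) is only known at points where the velocity is orthogonal to \(T_1\), and propagating it along a geodesic would require knowing in advance that the velocity stays in \(\ker C\); but the pointwise vanishing already gives \(C_{\nabla_TS}=0\) for kernel sections \(S\) (hence \(\nabla_TT_1=0\)) and \(\nabla_T(C_{T_1}e_2)\in\operatorname{Im}C_{T_1}=\mathbb{R}e_1\) (hence \(\nabla_Te_1=0\) and then \(T(a_1)=0\)), which is all your argument actually uses.
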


\begin{proof}
A computation shows that
\begin{equation*}\label{eq:curvcomp122}
R(e_1, e_2) e_2 = \alpha a_1 T_1 + (e_1(\beta) + e_2(\alpha) - \alpha^2 - \beta^2) e_1.
\end{equation*}
Since \( T_1 \in \Gamma \) and \(a_1 \neq 0\), we have that \(\alpha = 0.\) Thus
\begin{equation*}\label{eq:e1beta}
e_1(\beta) - \beta^2 = R(e_1, e_2, e_2, e_1).
\end{equation*}
and \(\nabla_{e_1} e_1 = 0.\)
Furthermore,
\begin{equation*}\label{eq:curvcomp211}
R(e_2, e_1)e_1 = (a_1 \beta - e_1(a_1))T_1 - a_1 \nabla_{e_1}T_1 + (e_1(\beta)-\beta^2)e_2.
\end{equation*}
Notice that \(\nabla_{e_1}T_1 \in \Gamma\) and is orthogonal to \(T_1\), so this tells us that \(a_1 \beta = e_1(a_1)\) and \(\nabla_{e_1} T_1 = 0.\)

To verify the claims involving an arbitrary nullity vector we check separately the cases where \(T\) is orthogonal to \(T_1\) and where \(T = T_1\) which is sufficient by linearity.
Let \(T \in \Gamma\) be unit-length and orthogonal to \(T_1\). 
Notice that \(\Gamma\) being totally geodesic implies that \(\nabla_T e_2 \in \Gamma^\perp\), so let this be \(f_1 e_1\). 
Likewise, let \(\nabla_T e_1 = -f_1 e_2\).
We also know that \(\nabla_{e_2}T \in \Gamma\) so let this be \(f_2 \tilde{T}\) where \(\tilde{T} \in \Gamma\) is unit-length.

A computation shows that
\begin{equation*}\label{eq:curvcompT21}
R(T, e_2)e_1 = T(a_1)T_1 + a_1 \nabla_T T_1 + (e_2(f_1) - T(\beta)) e_2 + f_2 \nabla_{\tilde{T}} e_1.
\end{equation*}
which implies \(T(a_1) = 0\) and \(\nabla_T T_1 = 0.\)
Next we have
\begin{equation*}\label{eq:curvcomp1TT1}
R(e_1, T)T_1 = -f_1 \nabla_{e_2} T_1
\end{equation*}
and as \(\nabla_{e_2}T_1 \neq 0\) as it has an \(e_1\)-component, \(f_1 = 0\) so we conclude that \(\nabla_T e_1 = \nabla_T e_2 = 0\) and we also get that \(T(\beta) = 0.\)
This concludes the case of \( T \) orthogonal to \( T_1 \).
For the case of \( T = T_1 \) we have \( \nabla_{T_1} e_i = 0 \) by construction.
Since
\begin{equation*}\label{eq:curvcompt121}
R(T_1, e_2)e_1 = T_1(a_1) T_1 + a_1 \nabla_{T_1} T_1 - T_1(\beta) e_2,
\end{equation*}
and \(\nabla_{T_1} T_1 \in \Gamma\) is orthogonal to \(T_1\), it follows that \(T_1(a_1) = T_1(\beta) = 0\) and \(\nabla_{T_1} T_1 = 0.\)
\end{proof}

In summary, we showed that all of \(e_1, e_2, T_1, a_1\), and \(\beta\) are constant along nullity directions.
Furthermore, \(e_1, e_2\), and \(T_1\) are also parallel in the \(e_1\) direction.

\subsection{Foliation by Flat Hyperplanes}
With the help of the splitting tensor we define a distribution on \(M_C\) which will foliate our manifold with complete flat hypersurfaces. 

\begin{defn}
Let \( D \) be the distribution on \(M_C\) which is spanned by \(e_1\) and \(\Gamma\).
\end{defn}

\begin{prop}
\begin{enumerate}[label = (\alph*)]
	\item The distribution \(D\) is integrable with totally geodesic flat leaves. Henceforth let \(\mathcal{F}_p\) denote the leaf of \(D\) containing \(p \in M_C\). \label{prop:integrableflatleaves}
	\item If \( M \) is complete, then \( \scal < 0 \). Furthermore, for \( p \in M_C \), the leaves \(\mathcal{F}_p\) are complete and contained in \(M_C\). \label{prop:negativescal}
\end{enumerate}
\end{prop}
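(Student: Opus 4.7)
The plan for (a) is a direct computation from Proposition \ref{prop:initialcurvcomps}. Integrability of $D$ reduces to $[e_1, T] \in D$ for $T \in \Gamma$, since $\Gamma$ is integrable. Using $\nabla_T e_1 = 0$ from part (b), the bracket equals $\nabla_{e_1} T$, which lies in $\Gamma$: either $T = T_1$ and $\nabla_{e_1} T_1 = 0$ by part (c), or $T \perp T_1$ and $(\nabla_{e_1} T)^{\Gamma^\perp} = -C_T e_1 = 0$ since the image of $C$ is one-dimensional. The totally geodesic property then follows from the same identities together with $\nabla_{e_1} e_1 = 0$ from part (a) and the totally geodesic property of $\Gamma$; flatness is immediate because for $X, Y, Z, W \in D = \R e_1 \oplus \Gamma$, any $R(X, Y, Z, W)$ either contains a nullity vector (vanishing by definition of $\Gamma$) or involves only $e_1$ (vanishing by antisymmetry).

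For (b), I first establish $\scal < 0$. Conullity $2$ forces the curvature tensor, and hence $\scal$ which determines it, to be nonzero everywhere, so by connectedness of $M$ the sign of $\scal$ is constant. To rule out $\scal > 0$ I pick $p \in M_C$ (nonempty under the standing local irreducibility assumption) and consider the $e_1$-geodesic $\gamma_1$ through $p$. Choosing the sign of $e_1$ so that $\tilde\beta(0) \geq 0$, where $\tilde\beta := \beta \circ \gamma_1$, the Riccati equation $\tilde\beta' = \tilde\beta^2 + \tfrac{1}{2}\tilde\scal$ with $\tilde\scal > 0$ makes $\tilde\beta$ strictly positive in finite time (since $\tilde\beta'(0) > 0$ whenever $\tilde\beta(0) = 0$); thereafter the comparison $\tilde\beta' \geq \tilde\beta^2$ forces $\tilde\beta \to +\infty$ at some finite $t_*$. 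Integrating $\tilde{a}_1' = \tilde{a}_1 \tilde\beta$ then gives $\tilde{a}_1 \to +\infty$, contradicting continuity of $a_1 = |C_{T_1}|$ on the compact image $\gamma_1([0, t_*]) \subset M$ (which exists by completeness).

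Completeness of $\mathcal{F}_p$ is then standard: a geodesic of the totally geodesic leaf is an ambient geodesic, hence complete, with velocity staying in $D$ under parallel transport and thus remaining tangent to $\mathcal{F}_p$. For $\mathcal{F}_p \subset M_C$, note that $a_1$ is constant along nullity directions by part (b) of the Proposition, so nullity flows preserve $M_C$, and it suffices to show that the $e_1$-geodesic from any $q \in M_C$ stays in $M_C$ for all time. If it exited at finite $t_*$, then $\tilde{a}_1 \to 0$ would force $\int_0^{t_*} \tilde\beta = -\infty$; however $\scal$ is now negative and continuous, hence bounded below by some $-M$ on the compact $\gamma_1([0, t_*])$, so the bound $\tilde\beta' \geq \tilde\beta^2 - M/2$ pins $\tilde\beta \geq \min(\tilde\beta(0), -\sqrt{M/2})$ and makes the integral finite --- a contradiction. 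The main technical obstacle is precisely this Riccati analysis, since the same equation must yield blow-up when $\scal > 0$ yet remain bounded when $\scal < 0$; both conclusions hinge on the correct sign cooperation between $\tilde\beta^2$ and $\tfrac{1}{2}\tilde\scal$.
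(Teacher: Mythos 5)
Your proposal is correct and follows essentially the same route as the paper: part (a) is read off from Proposition \ref{prop:initialcurvcomps} together with integrability of \( \Gamma \), and part (b) rests on the ODEs \( e_1(\beta) = \beta^2 + \tfrac{1}{2}\scal \) and \( e_1(a_1) = a_1\beta \) along the complete \( e_1 \)-geodesic, with the contradiction coming from \( a_1 = \abs{C_{T_1}} \) blowing up on a compact geodesic segment and with \( T(a_1) = 0 \) handling the nullity directions of the leaf. Your Riccati formulation for \( \beta \) is just the paper's Jacobi equation for \( 1/a_1 \) after taking logarithmic derivatives (and your explicit sign choice \( \tilde\beta(0) \geq 0 \) and avoidance of any uniform lower bound on \( \scal \) is, if anything, a slightly more careful rendering of the same argument), so there is nothing genuinely different to compare.
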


\begin{proof}
From Proposition \ref{prop:initialcurvcomps} and the fact that \( \Gamma \) is totally geodesic, it follows that \( D \) is completely integrable with totally geodesic leaves. 
The leaves are flat as they contain only one non-nullity direction, so \ref{prop:integrableflatleaves} is proved.

For \ref{prop:negativescal},  we will show that geodesics with initial velocity in \(D\) are complete and remain in \(M_C\).
Starting at \(p \in M_C\), \(\nabla_{e_1}e_1 = 0\), so we have that the integral curve of \(e_1\) is a geodesic \(\eta\) defined as long as \(C \neq 0\). 
Since \(M\) is complete, we know that \(\eta\) is a complete geodesic so we just have to show it remains in \(M_C\).
We now use the facts that \(\alpha = 0\), \(e_1(\beta)-\beta^2 = R(e_1, e_2, e_2, e_1) = \sec(e_1, e_2)\), and \(e_1(a_1) = a_1 \beta\).
Letting a prime denote an \(e_1\) derivative we have
\begin{equation*}\label{eq:1overa1jacobi}
	\left( \frac{1}{a_1}\right)'' = -\left(\frac{{a_1}'}{a_1^2}\right)' = -\left(\frac{\beta}{a_1}\right)' = -\frac{\sec(e_1,e_2)}{a_1}
\end{equation*}
and therefore
\begin{equation}\label{eq:1overa1}
	\left(\frac{1}{a_1}\right)'' + \frac{1}{a_1}\sec(e_1,e_2) = 0.
\end{equation}

Thus \(\frac{1}{a_1}\) satisfies the Jacobi equation of a Jacobi field along \(\eta\) in the \(e_2\) direction. 
By \eqref{eq:1overa1}, if \( \scal > 0 \) then \(\frac{1}{a_1} \) has a zero in finite time, contradicting completeness. 
On the other hand, if \( \scal < 0 \) then \eqref{eq:1overa1} shows that \( a_1 \) will be nonzero along \( \eta \) so it remains in \( M_C \).
As \( T(a_1) = 0 \) for any nullity vector \( T \), the whole leaf must be contained in \( M_C \). 
\end{proof}

Since the distribution \(D\) is integrable we get a foliation \(\mathcal{F}\) of \(M_C\) with complete flat totally geodesic leaves. 
This extends continuously to a foliation on the closure of \(M_C\) with leaves that are complete, flat, totally geodesic hypersurfaces.
Since any codimension 1 geodesic foliation is locally Lipschitz \cite{Zeghib:1999}, there exists a unique \( C^{1,1} \) curve \( \gamma \) orthogonal to the foliation through each point \( p \in M_C \) which can thus be viewed as an integral curve of \( e_2 \). 
This curve satisfies the following properties: 
\begin{prop}\label{prop:curvehitsonce}
	Suppose that \( M \) is a complete, simply connected, conullity 2 manifold whose scalar curvature is constant along nullity geodesics. 
	For any \( p \in \bar{M}_C \), the closure of \( M_C \), there exists a unique, maximal (in \( \bar{M}_C \)) \( C^{1,1} \) integral curve \( \gamma \) of \( e_2 \) which is orthogonal to \( \mathcal{F} \) at every point. 
	Furthermore, \( \gamma \) intersects each leaf of \( \mathcal{F} \) that is in the connected component of \( \bar{M}_C \) containing \( p \) exactly once.
\end{prop}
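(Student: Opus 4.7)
The plan is in three steps: produce $\gamma$ via Lipschitz ODE theory, forbid repeated leaf intersections by a topological separation argument, and obtain surjectivity onto the leaf space of the component by an open-and-closed argument.

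\textbf{Step 1: existence, uniqueness, maximality.} By the cited Lipschitz regularity of codimension-one totally geodesic foliations, the unit normal field $e_2$ is well-defined up to sign and locally Lipschitz on $\bar{M}_C$. Picking a sign for $e_2(p)$ and extending by continuity along the (to-be-constructed) curve avoids any global sign ambiguity. Picard--Lindel\"of for Lipschitz vector fields then produces a unique integral curve $\gamma$, which is $C^{1,1}$ since $\gamma' = e_2 \circ \gamma$ is Lipschitz; extending to the maximal domain in $\bar{M}_C$ is routine, and orthogonality to $\mathcal{F}$ is built in.

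\textbf{Step 2: at most one intersection per leaf.} Because $\scal < 0$ and the curvature tensor at each point is that of $\Sigma^2 \times \R^n$, we have $\sec_M \leq 0$; together with completeness and simple connectedness, Cartan--Hadamard gives $M \cong \R^{n+2}$. Each leaf $L$ of $\mathcal{F}$ is a complete, connected, totally geodesic hypersurface, hence injectively immersed in the Hadamard manifold $M$ (otherwise one would obtain a non-constant geodesic loop) and therefore properly embedded. The normal bundle is trivialized by $e_2$, so $L$ is two-sided; combined with $\pi_1(M) = 0$, a standard transversal-intersection mod $2$ argument shows that $L$ separates $M$ into open regions $M^+ \sqcup M^-$, with $e_2$ pointing from $L$ into $M^+$. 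If $\gamma(t_1), \gamma(t_2) \in L$ with $t_1 < t_2$, let $s \in (t_1, t_2]$ be the first re-entry time. Since $\gamma'(t_1) = e_2(\gamma(t_1))$ points into $M^+$, connectedness of $(t_1,s)$ forces $\gamma((t_1,s)) \subset M^+$; but $\gamma'(s) = e_2(\gamma(s))$ also points into $M^+$, so $\gamma(s-\varepsilon) \in M^-$ for small $\varepsilon > 0$, a contradiction.

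\textbf{Step 3: every leaf in the component is met.} Let $U$ be the component of $\bar{M}_C$ containing $p$ and $B = U/\mathcal{F}$ its leaf space, a connected $1$-manifold. The projection $\bar\gamma \colon I \to B$ is injective by Step 2 and locally a Lipschitz homeomorphism onto its image, since the flow of $e_2$ gives a Lipschitz transversal parametrization of the foliation. Hence $\bar\gamma(I)$ is open in $B$. For closedness, suppose $[L^*] \in B$ is a boundary point of $\bar\gamma(I)$ and pick $t_n$ approaching an endpoint $t^*$ of $I$ with $\bar\gamma(t_n) \to [L^*]$. Local trivialization of $\mathcal{F}$ near $L^*$ shows $\gamma(t_n)$ is Cauchy in $M$, so by completeness of $M$ and closedness of $U$ in $M$ we have $\gamma(t_n) \to q^* \in U$. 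Applying Step 1 at $q^*$ then extends $\gamma$ past $t^*$, contradicting maximality; hence $\bar\gamma(I) = B$.

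\textbf{Main obstacle.} The heart of the proof is Step 2: showing that every leaf of $\mathcal{F}$ is a properly embedded, two-sided, separating hypersurface of $M$. This requires the simultaneous use of non-positive curvature, completeness of $M$ and of the leaves, and simple connectedness. Once this global topology is established, the local fact that $e_2$ has a consistent one-sided orientation across each leaf promotes to the global obstruction that rules out repeated intersections.
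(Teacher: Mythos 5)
Your overall plan (Lipschitz ODE theory for existence/uniqueness, separation by leaves in the Cartan--Hadamard manifold for "at most once", an exhaustion argument for "every leaf") mirrors the strategy the paper attributes to Proposition 16 of \cite{Brooks:2021}, and Step 1 is fine: it is exactly the paper's setup via \cite{Zeghib:1999}. The problem is in Step 2, at the decisive moment. You define \(M^+\) as the side of \(L\) into which \(e_2\) points, and then assert that \(\gamma'(s)=e_2(\gamma(s))\) "also points into \(M^+\)". But there are two different continuations of the unit normal in play: the choice continued along \(\gamma\) from \(\gamma(t_1)\) to \(\gamma(s)\), and the choice continued along the connected leaf \(L\) from \(\gamma(t_1)\) to \(\gamma(s)\). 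These agree only if the normal line field of \(\mathcal{F}\) is orientable over the loop formed by \(\gamma|_{[t_1,s]}\) and a path in \(L\); since \(e_2\) is defined (up to sign) only on \(\bar{M}_C\), not on all of \(M\), simple connectedness of \(M\) does not supply this --- the null-homotopy may leave \(\bar{M}_C\). Worse, the two statements are essentially equivalent: if \(\gamma\) does return to \(L\) at time \(s\), then since \(\gamma((t_1,s))\subset M^+\) and the crossing at \(s\) is transverse, \(\gamma'(s)\) necessarily points into \(M^-\); i.e.\ a return is exactly a failure of the coorientation consistency you assumed. So the contradiction in Step 2 begs the question, and this is precisely the nontrivial content of the proposition (the leaves are not equidistant, \(\nabla_{e_2}e_2=\beta e_1\neq 0\), so there is no cheap "level set of a global transverse function" shortcut either).

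Step 3 also needs repair. The leaf space of a component of \(\bar{M}_C\) need not be a \(1\)-manifold, so "open and closed in a connected \(1\)-manifold" is not available as stated; and your closedness argument only treats a finite endpoint \(t^*\) of the maximal interval (where the Cauchy property comes from \(\gamma\) being unit speed, not from a local trivialization of \(\mathcal{F}\)). If the maximal domain is all of \(\R\), nothing in your argument prevents \(\gamma\) from accumulating on a leaf of the same component without ever reaching it, and this case must be excluded to get "intersects each leaf". In short: the skeleton is the right one, but the two genuinely hard points --- ruling out a return to a leaf (equivalently, global coorientability of \(e_2\) along such a loop) and ruling out stalling before exhausting the component --- are exactly the places where your write-up substitutes assertion for proof; these are the parts for which the paper defers to the detailed argument of Proposition 16 in \cite{Brooks:2021}.
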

This is proved by following the argument of Proposition 16 in \cite{Brooks:2021} where he proves the same result in the dimension 3 curvature homogeneous case. 
In particular, if \( M \) is locally irreducible, as assumed in Theorem \ref{thm:themetriconaV}, then \( \gamma\) is globally defined and meets each leaf of \( \mathcal{F} \) exactly once.


\section{Defining the remaining canonical nullity vectors}\label{sec:remainingnullity}
We now define an orthonormal basis of \(D\) at each point by completing \(T_1\) to an orthonormal basis of \(\Gamma\).
Start by fixing a point \(p\). 
Let \(\gamma\) be a maximal integral curve of \(e_2\) through \(p\) as above.
Then we define \(T_2, \dots, T_{n}\) to be the Frenet frame along \(\gamma\) whenever it is well-defined.
In other words, let
\begin{equation*}
	X_1 := (\nabla_{e_2}T_1)^{\text{span}\{e_1, e_2, T_1\}^\perp} \qand a_2 = \abs{X_1}
\end{equation*}
where we are projecting onto the orthogonal complement of the span of \(e_1, e_2\) and \(T_1\).
Then, if \(a_2 \neq 0\), we let \(T_2 = \frac{X_1}{a_2}\) and
\begin{equation*}
	\nabla_{e_2}T_1 = -a_1 e_1 + a_2 T_2. 
\end{equation*}
We repeat this process, considering \(\nabla_{e_2} T_2\), projecting it to an orthogonal complement and normalizing it. 
This gives
\begin{equation*}
	\nabla_{e_2} T_i = -a_i T_{i-1} + a_{i+1} T_{i+1}
\end{equation*}
for \(i = 2, \dots, n-1\) and lastly
\begin{equation*}
	\nabla_{e_2}T_{n} = -a_{n}T_{n-1}.
\end{equation*}
Thus, \(\{e_2, e_1, T_1, \dots, T_{n}\}\) as described above is the Frenet frame of \(\gamma\) when \(\gamma\) is sufficiently regular.
The curvature of \(\gamma\) is \(\kappa = \beta\), the torsion is \( \tau = a_1\), and the generalized curvatures are \(\chi_{i+1} = a_i\). 
Notice that this frame is well-defined up to sign and hence are isometry invariants of \( M \) after choosing a base point \( \gamma(0) \).

\begin{defn}\label{defn:mf}
	Let \(M_F \subset M_C\) be the open subset where \(a_i \neq 0\) for \(i = 1, \dots, n\).
	Equivalently, this is the set on which the full Frenet frame of \( \gamma\) is well-defined.
\end{defn}

\begin{prop}\label{prop:niceremainingT}
	On \(M_F\) we have the following: 
	\begin{equation*}
		e_1(a_i) = a_i \beta, \quad \nabla_{e_1}T_i = 0, \quad T(a_i) = 0, \qand \nabla_{T}T_i = 0,
	\end{equation*}
	where \(T \in \Gamma\) and \(1 \leq i \leq n\).
	In particular, on each leaf of the foliation \( \mathcal{F} \) each \( a_i \) is either zero or nonzero everywhere. Thus \( \mathcal{F} \) restricts to a complete foliation on \( M_F \) which we will also call \( \mathcal{F} \). 
\end{prop}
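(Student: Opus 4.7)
The plan is a single induction on $i$, with base case $i = 1$ supplied by Proposition \ref{prop:initialcurvcomps} together with the Riccati-equation identity $T_1(a_1) = 0$. I would simultaneously carry along the auxiliary statement $T_1, \dots, T_i \in \Gamma$, which follows from the construction since each $T_j$ is taken in the orthogonal complement of $\operatorname{span}\{e_1, e_2, T_1, \dots, T_{j-1}\}$ and so is automatically orthogonal to $e_1, e_2$.

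The workhorse of the inductive step is the vanishing $R(X, e_2) T_{i-1} = 0$, which holds because $T_{i-1} \in \Gamma$. Expanding this gives
\begin{equation*}
\nabla_X \nabla_{e_2} T_{i-1} = \nabla_{e_2} \nabla_X T_{i-1} + \nabla_{[X, e_2]} T_{i-1},
\end{equation*}
and I would apply it with the three choices $X = e_1$, $X = T_1$, and $X = T$ (a unit nullity vector perpendicular to $T_1$). Using Proposition \ref{prop:initialcurvcomps} the brackets work out to $[e_1, e_2] = -a_1 T_1 + \beta e_2$, $[T_1, e_2] = a_1 e_1 - a_2 T_2$, and $[T, e_2] = -\nabla_{e_2} T \in \Gamma$. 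In each case the inductive hypothesis kills $\nabla_X T_{i-1}$ and all of $\nabla_{[X,e_2]} T_{i-1}$, reducing the identity to
\begin{equation*}
\nabla_X\bigl(-a_{i-1} T_{i-2} + a_i T_i\bigr) = c_X \bigl(-a_{i-1} T_{i-2} + a_i T_i\bigr)
\end{equation*}
with $c_{e_1} = \beta$ and $c_T = c_{T_1} = 0$ (using the convention $T_0 = e_1$ when $i = 2$). Expanding the left-hand side and invoking the inductive knowledge of $X(a_{i-1})$ and $\nabla_X T_{i-2}$ cancels the $T_{i-2}$ contributions. Projecting the remainder onto $T_i$ and its orthogonal complement — legitimate because $|T_i| = 1$ forces $\nabla_X T_i \perp T_i$ — reads off all four claimed identities simultaneously.

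For the final statement about the foliation, $T(a_i) = 0$ says $a_i$ is constant along nullity leaves, while $e_1(a_i) = a_i \beta$ is a linear first-order ODE along integral curves of $e_1$; since a leaf of $\mathcal{F}$ is generated by $\Gamma$ and $e_1$, the ODE forces $a_i$ to be either identically zero or nowhere zero on each leaf, so $M_F$ is a union of entire leaves of $\mathcal{F}$ and the restricted foliation inherits completeness and flatness. I expect the main bookkeeping nuisance to be the low index $i = 2$, where $T_{i-2}$ does not literally exist and one must plug in $\nabla_{e_2} T_1 = -a_1 e_1 + a_2 T_2$ directly, together with the concurrent verification that each freshly produced $T_i$ genuinely lies in $\Gamma$ so that the curvature identity $R(X, e_2) T_{i-1} = 0$ remains available for the next round of induction.
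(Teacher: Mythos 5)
Your argument is correct and is essentially the paper's own proof: an induction on \(i\) whose engine is the vanishing of \(R(X,e_2)T_{i-1}\) (because \(T_{i-1}\in\Gamma\)) evaluated for the three directions \(X=e_1,\,T_1,\) and \(T\perp T_1\), after which the inductive hypotheses reduce the identity to a statement about \(X(a_i)\) and \(\nabla_X T_i\) read off by projecting onto \(T_i\) and its complement; the paper merely writes out the curvature expressions \(R(X,e_2)T_i\) case by case instead of packaging them with your constant \(c_X\). One small wording slip: for \(X=e_1\) the bracket term \(\nabla_{[e_1,e_2]}T_{i-1}\) is not entirely killed — its \(\beta e_2\) part survives and is exactly what produces \(c_{e_1}=\beta\) — but your displayed reduced identity already accounts for this correctly, so there is no gap.
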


\begin{proof}
	The proof is by induction. 
	Recall that Proposition \ref{prop:initialcurvcomps} covers the \(i = 1\) case, as well as that \(\nabla_T e_1 = 0\) for any \(T \in \Gamma\).
	
	We check the \(i = 2\) case directly since the expression of \(\nabla_{e_2} T_1\) is slightly different than the rest.
	Observe that
	\begin{equation*}\label{eq:curvcomp12T1}
		R(e_1, e_2)T_1 = (e_1(a_2) - a_2 \beta) T_2 + a_2 \nabla_{e_1}T_2,
	\end{equation*}
	and hence we conclude that \(e_1(a_2) = a_2 \beta\) and \(\nabla_{e_1}T_2 = 0\).

	To complete the other half of the \(i=2\) case we first consider the case of \(T = T_1\).
	Then we have
	\begin{equation*}\label{eq:curvcompT12T1}
		R(T_1, e_2)T_1 = -T_1(a_1)e_1 + T_1(a_2)T_2 + a_2 \nabla_{T_1}T_2,
	\end{equation*}
	and hence we conclude that \(T_1(a_2) = 0\) and \(\nabla_{T_1}T_2 = 0\).
	
	It remains to check the case of \(T \perp T_1\).
	Then we have
	\begin{equation*}\label{eq:curvcompT2T1}
		R(T, e_2)T_1 = -T(a_1) e_1 + T(a_2) T_2 + a_2 \nabla_T T_2,
	\end{equation*}
	and hence \(\nabla_T T_2 = 0\) and \(T(a_2) = 0\).
	This shows that the equations holds for \(i =2\).
	
	We may now proceed with the induction argument in a very similar manner as above.
	Suppose the result holds up to \(T_i\). 
	Then we have
	\begin{equation*}\label{eq:curvcomp12Ti}
		R(e_1, e_2)T_i = (e_1(a_{i+1}) - a_{i+1}\beta)T_{i+1} + a_{i+1} \nabla_{e_1} T_{i+1},
	\end{equation*}
	and so \(e_1(a_{i+1}) = a_{i+1}\beta\) and \(\nabla_{e_1} T_{i+1} = 0\).
	
	For \(T = T_1\) we have
	\begin{equation*}\label{eq:curvcompT12Ti}
		R(T_1, e_2)T_i = T_1(a_{i+1})T_{i+1} + a_{i+1} \nabla_{T_1} T_{i+1},
	\end{equation*}
	and so \(T_1(a_{i+1}) = 0\) and \(\nabla_{T_1} T_{i+1} = 0\). 
	On the other hand, if \(T \perp T_1\), then
	\begin{equation*}\label{eq:curvcompT2Ti}
		R(T, e_2)T_i = T(a_{i+1})T_{i+1} + a_{i+1} \nabla_T T_{i+1},
	\end{equation*}
	and so \(T(a_{i+1}) = 0\) and \(\nabla_T T_{i+1} = 0\) for all \(T\).
	This finishes the proof.
\end{proof}

With respect to the basis \(\{e_2, e_1, T_1, \dots, T_{n}\}\), the matrix of \(\nabla_{e_2}\) is
\begin{equation*}
	\begin{pmatrix} 0 & -\beta & 0 & 0 & \dots & \dots & 0 \\
		\beta & 0 & -a_1 & 0 & \dots & \dots & 0 \\
		0 & a_1 & 0 & -a_2 & \dots & \dots & 0 \\
		0 & 0 & a_2 & 0 & \ddots &  & 0 \\
		\vdots & \vdots & \vdots & \ddots & \ddots & \ddots & \vdots \\
		\vdots & \vdots & \vdots &  & \ddots & 0 & -a_{n} \\
		0 & 0 & 0 & \dots & \dots & a_{n} & 0
	\end{pmatrix}
\end{equation*}
and \( \nabla_v = 0 \) for all \( v \perp e_2 \).


\section{Candidates for the metric}\label{sec:metric}
In this section investigate the properties of a certain metric with the goal of showing that this is the form of the metric on \( M_F \).

\subsection{Defining the metric}
Suppose we are given some positive function \(\bfun(x,\uu)\), normalized such that \(\bfun(x,0) = 1\) for all \(x\).
Furthermore suppose also that \(\bfun_{\uu \uu} \neq 0\). 
Then we may consider the metric of the form \eqref{eq:bmetric} in coordinates \((x, \uu, \vv{1}, \dots, \vv{n}) \in (c_1, c_2) \times \R^{n+1}\) defined by
\begin{equation*}
		g_{\bfun, f_i} = \bfun(x,u)^2 \dd x^2 + \sum_{j=0}^{n} \bigg(\dd \vv{j} + \Big( \vv{j-1} f_{j}(x) - \vv{j+1} f_{j+1}(x) \Big) \dd x\bigg)^2. \tag{\( \star \)}                                                                                                                           
\end{equation*}
Notice that we do not assume that the functions \( f_i \) are non-zero everywhere.

\begin{thm}\label{lem:bfunmetricprops}
If \(g_\bfun\) is the metric given by \eqref{eq:bmetric} where \(\bfun, f_i\) are smooth functions, with \(\bfun, \bfun_{\uu \uu} \neq 0\) and \( \bfun(x,0) = 1 \), then
\begin{enumerate}[label = (\alph*)]
\item \label{item:isametric}
\(g_\bfun\) is a metric of conullity 2 with scalar curvature \(\scal = -2\frac{\bfun_{\uu \uu}}{\bfun}\).

\item \label{item:nullity}
The coordinate vectors \(\pdv{v_i}\) are nullity vectors.

\item \label{item:orthonormal}
The basis \(\left\{\orthvect, \pdv{\uu}, \pdv{\vv{1}}, \dots, \pdv{\vv{n}} \right\}\) is orthonormal, where
\begin{equation*}
\orthvect = \frac{1}{\bfun(x,\uu)}\Bigg(\pdv{x} - \sum_{i = 0}^n (\vv{i-1} f_{i}(x) - \vv{i+1} f_{i+1}(x))\pdv{\vv{i}} \Bigg).
\end{equation*}

\item \label{item:a1def}
When \(f_1(x) \neq 0\), \(e_1 = \pdv{\uu}\), \(e_2 = \orthvect\), and \(T_1 = \pdv{\vv{1}}\) where \(e_1, e_2\), and \(T_1\) are defined by the splitting tensor. 
On the subset where all \( f_i(x) \neq 0 \), \(\left\{\orthvect, \pdv{\uu}, \pdv{\vv{1}}, \pdv{\vv{2}}, \dots, \pdv{\vv{n}} \right\} \) is the Frenet frame of \( \gamma(x) = (x,0, \dots, 0) \).

\item \label{item:generalaiformula}
The functions \( a_i \) and \( \beta \) from section \ref{sec:remainingnullity} are given by
\begin{equation*}
a_i(x, \uu, \vv{i}) = \frac{f_i(x)}{\bfun(x,\uu)} \quad \text{and} \quad \beta(x,\uu, \vv{i}) = -\frac{\bfun_{\uu}(x,\uu)}{\bfun(x,\uu)}.
\end{equation*}

\item \label{item:geodcurv}
The leaves \(\mathcal{F}_p\) are hyperplanes with \(x\) constant on each, and \( \gamma(x) \) is an arc length parametrized curve orthogonal to the leaves with geodesic curvature \(-\bfun_{\uu}(x,0)\).

\item \label{item:bfunlocirred}
The metric \(g_\bfun\) is locally irreducible if and only if each \(f_i^{-1}(0)\) contains no open subsets. 

\end{enumerate}
\end{thm}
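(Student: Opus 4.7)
The central step is an explicit computation of the Levi-Civita connection in the frame of (c); with this in hand, parts (a), (b), (d), (e), (f) all follow by direct inspection or a single curvature computation, while (g) requires a separate holonomy-type argument. For (c), I first observe that the 1-forms \( \bfun\,dx \) and \( d\vv{j} + (\vv{j-1}f_j - \vv{j+1}f_{j+1})\,dx \) for \( 0 \le j \le n \) are dual to the proposed frame under the paper's boundary conventions, so \((\star)\) directly displays \(g\) as the sum of their squares and the frame is orthonormal. A routine coordinate calculation then produces the only nonvanishing Lie brackets
\begin{align*}
[e, \partial_{\uu}] &= \tfrac{\bfun_{\uu}}{\bfun}\,e + \tfrac{f_1}{\bfun}\,\partial_{\vv{1}}, \\
[e, \partial_{\vv{j}}] &= \tfrac{f_{j+1}}{\bfun}\,\partial_{\vv{j+1}} - \tfrac{f_j}{\bfun}\,\partial_{\vv{j-1}} \qquad (1 \le j \le n),
\end{align*}
and Koszul's formula yields \( \nabla_e e = -(\bfun_{\uu}/\bfun)\partial_{\uu} \), \( \nabla_e \partial_{\uu} = (\bfun_{\uu}/\bfun)e + (f_1/\bfun)\partial_{\vv{1}} \), \( \nabla_e \partial_{\vv{j}} = -(f_j/\bfun)\partial_{\vv{j-1}} + (f_{j+1}/\bfun)\partial_{\vv{j+1}} \), while \( \nabla_X Z = 0 \) for every \(X \in \{\partial_{\uu}, \partial_{\vv{1}}, \ldots, \partial_{\vv{n}}\}\) and every frame vector \(Z\).

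Setting \( \beta := -\bfun_{\uu}/\bfun \) and \( a_i := f_i/\bfun \) now matches the connection to the matrix of \( \nabla_{e_2} \) in Section \ref{sec:remainingnullity}, giving (e). For (d), \( C_{\partial_{\vv{1}}}(e) = -(\nabla_e \partial_{\vv{1}})^{\Gamma^\perp} = (f_1/\bfun)\partial_{\uu} \) is nonzero exactly when \( f_1 \ne 0 \), while \( C_{\partial_{\vv{j}}} = 0 \) for \( j \ge 2 \); this identifies \( T_1 = \partial_{\vv{1}} \), \( e_1 = \partial_{\uu} \), \( e_2 = e \), and the Frenet identification of the remaining \( T_j \) follows from the \( \nabla_e \partial_{\vv{j}} \) recursion. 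For (b), every term of \( R(\partial_{\vv{j}}, X)Y \) vanishes because \( \nabla_{\partial_{\vv{j}}} \) kills each frame vector, the coefficients in \( \nabla_e Z \) are independent of \( \vv{j} \), and the bracket \( [\partial_{\vv{j}}, e] \) lies in the span on which \( \nabla \) acts trivially, so \( \partial_{\vv{j}} \in \Gamma \) and the conullity is at most two. For (a), the nullity of the \( \partial_{\vv{j}} \) collapses \( \scal \) to \( 2R(e, \partial_{\uu}, \partial_{\uu}, e) \), and a direct curvature computation using the connection formulas gives \( R(e, \partial_{\uu})\partial_{\uu} = -(\bfun_{\uu\uu}/\bfun)e \), hence \( \scal = -2\bfun_{\uu\uu}/\bfun \); the hypothesis \( \bfun_{\uu\uu} \ne 0 \) then rules out \( \partial_{\uu} \in \Gamma \), giving conullity exactly two. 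For (f), the distribution \(D\) coincides with \( \ker dx \), so its leaves are level sets of \(x\); along \( \gamma(x) = (x, 0, \ldots, 0) \) every \( \vv{j-1}f_j - \vv{j+1}f_{j+1} \) vanishes and \( \bfun(x, 0) = 1 \), so \( \gamma' = e|_{\gamma} \) is unit and \( \nabla_{\gamma'}\gamma' = -\bfun_{\uu}(x, 0)\,\partial_{\uu} \) exhibits the stated geodesic curvature.

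The remaining part (g) needs a separate argument. The easy direction is immediate from the connection formulas: if \(f_k\) vanishes on an open set \(U\), then \( \mathrm{span}(\partial_{\vv{k}}, \ldots, \partial_{\vv{n}}) \) and its orthogonal complement on \(U\) are both preserved by \(\nabla\), producing a local product decomposition and ruling out local irreducibility. For the converse, assume no \( f_i^{-1}(0) \) contains an open subset and suppose, for contradiction, that \(V\) is a proper nontrivial \(\nabla\)-parallel sub-distribution on some open set. By density one may work near a point \(p\) where all \( f_i \ne 0 \); there the matrix of \( \nabla_e \) is tridiagonal antisymmetric with every off-diagonal \( a_i = f_i/\bfun \) nonzero. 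The main obstacle is the linear-algebra step: showing that invariance under \( \nabla_e \), combined with the vanishing of \( \nabla_X \) on other directions (so the holonomy is driven entirely by integration along \(e\)), forces \( V|_p \) to be trivial or all of \( T_p M \). Concretely, for any local section \(Y\) of \(V\) the coefficients of \( \nabla_e Y, \nabla_e \nabla_e Y, \ldots \) in the frame must all lie in \(V|_p\), and the nonvanishing \( a_i \) couple successive \( \partial_{\vv{j}} \)'s, eventually producing a basis of \( T_p M \) within \( V|_p \). Hence no proper parallel distribution exists near \(p\), giving local irreducibility.
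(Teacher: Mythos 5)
Your treatment of parts (a)--(f) is correct and is exactly the kind of computation the paper leaves to the reader: the coframe/frame duality, the brackets, the Koszul computation, and the identification \( \beta = -\bfun_{\uu}/\bfun \), \( a_i = f_i/\bfun \) all check out, as does the easy direction of (g) (if some \( f_k \) vanishes on an open set of \(x\)-values, the two complementary spans you name are parallel and de Rham gives local reducibility, which is the paper's argument).

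The gap is in the converse of (g), the only part the paper proves in detail, and it sits precisely at the step you flag as "the main obstacle." Knowing that \( \nabla_e Y, \nabla_e\nabla_e Y, \dots \) lie in \(V\) for every section \(Y\) of a parallel distribution \(V\) gives you nothing for free: these iterates lie in \(V\) automatically, and since the coefficients \(y_j\) of \(Y\) in the frame are arbitrary functions, the derivative terms \(e(y_j)\) can absorb the tridiagonal coupling -- nothing in your sketch excludes, say, \( \nabla_e Y \) being proportional to \(Y\), so "the nonvanishing \(a_i\) eventually produce a basis of \(T_pM\) within \(V|_p\)" is an assertion of the conclusion, not an argument. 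The missing idea is to first manufacture specific frame vectors inside \(V\) using the fact that a parallel distribution is invariant under the curvature operators: since \( R(e_1,e_2)e_1 = \frac{\bfun_{\uu\uu}}{\bfun}e_2 \), \( R(e_1,e_2)e_2 = -\frac{\bfun_{\uu\uu}}{\bfun}e_1 \), and \( R(e_1,e_2)\pdv{\vv{i}} = 0 \), applying \(R(e_1,e_2)\) twice to a section of \(V\) with nonzero component in \( \mathrm{span}\{e_1,e_2\} \) forces \(e_1, e_2 \in V\) on an open set. Only then may one apply \( \nabla_{e_2} \) repeatedly to the frame fields \( e_1, T_1, T_2, \dots \) themselves -- whose coefficients are constant, so no \(e(y_j)\) terms appear -- and use that all \( a_i \neq 0 \) on a dense set to sweep every \( \pdv{\vv{i}} \) into \(V\). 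You must also handle the case your dichotomy omits: if no such section exists, then \(V\) is orthogonal to \(e_1\) and \(e_2\) everywhere, and one passes to the parallel complement \(V^\perp\), which contains \(e_1, e_2\), and runs the same argument to conclude \(V^\perp = TM\), i.e. \(V\) is trivial. Without the curvature-invariance step and this second case, the irreducibility direction of (g) is not proved.
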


\begin{proof}
Parts \ref{item:isametric}\textendash\ref{item:geodcurv} are straightforward computations and are left to the reader.

For \ref{item:bfunlocirred}, if some \(f_{i_0}(x)\) vanishes on an interval \((b_1, b_2)\) then on \((b_1, b_2) \times \R^{n+1}\) we have two parallel subdistributions \( D_1 \) and \( D_2 \) given by
\begin{equation*}
D_1 \coloneqq \text{span}\{e_2, e_1\} \qand D_2 \coloneqq \text{span}\left\{\pdv{\vv{1}}, \dots, \pdv{\vv{n}}\right\},
\end{equation*}
in the case of \( i_0 = 1 \) and by
\begin{equation*}
D_1 \coloneqq \text{span}\left\{e_2, e_1, \pdv{\vv{1}}, \dots, \pdv{\vv{i_0-1}}\right\} \qand D_2 \coloneqq \text{span}\left\{\pdv{\vv{i_0}}, \dots, \pdv{\vv{n}}\right\},
\end{equation*}
in the case of \( i_0 \geq 2 \). 
These are parallel since all derivatives are already zero except possibly those in the \( e_2 \) direction.
Since \( f_{i_0}(x) = 0 \) for \( x \in (b_1, b_2) \), part \ref{item:generalaiformula} implies that \( a_{i_0} = 0 \) on \( (b_1, b_2) \times \R^{n+1} \).
Thus \( \nabla_{e_2}X \in D_1 \) for all \( X \in D_1 \) since
\begin{equation*}
	\nabla_{e_2} \pdv{\vv{i_0-1}} = - a_{i_0-1} \pdv{\vv{i_0-2}} + a_{i_0} \pdv{\vv{i_0}}
\end{equation*}
and \( a_{i_0} = 0 \). 
Similarly, \( \nabla_{e_2}Y \in D_2 \) for all \( Y \in D_2 \).
By the de Rham decomposition theorem, the metric is locally reducible.

Now suppose that all \(f_i(x)\) do not vanish on any open interval. 
We will show that the tangent bundle contains no proper parallel subdistributions, which implies that the metric is locally irreducible. 

Suppose we do have some proper parallel subdistribution, call it \(Q\).
Suppose also that there exists an open interval \((b_1, b_2)\) on which \(Q\) is not orthogonal to both \(e_1\) and \(e_2\), so it has a section of the form
\begin{equation*}
q = p_1 e_1 + p_2 e_2 + q_i \pdv{\vv{i}}
\end{equation*}
for some functions \(p_1\),  \(p_2\), \(q_i\) with \(p_1^2 + p_2^2 \neq 0\). 
Since parallel distributions are closed under covariant derivatives they are also closed under the curvature tensor. 
In particular, we can apply \(R(e_1, e_2)\) to our distribution. 
Recall that
\begin{equation*}
R(e_1, e_2)e_1 = \frac{\bfun_{\uu \uu}}{\bfun} e_2, \quad
R(e_1, e_2)e_2 = -\frac{\bfun_{\uu \uu}}{\bfun} e_1, \qand
R(e_1, e_2)\pdv{\vv{i}} = 0.
\end{equation*}
Hence we get, after applying the curvature tensor twice, that both \(\frac{\bfun_{\uu \uu}}{\bfun} (p_1 e_2 - p_2 e_1)\) and \(-\frac{\bfun_{\uu \uu}}{\bfun} (p_1 e_1 + p_2 e_2)\) are sections of \(Q\).
Since \(p_1^2 + p_2^2 \neq 0\), both \(e_1\) and \(e_2\) are in \(Q_p\). 
As no \(f_i^{-1}(0)\) contains any open subset, on an open subset of \( (b_1, b_2) \) we may apply \(\nabla_{e_2}\) repeatedly to conclude that each \(\pdv{\vv{i}}\) is in \(Q\), so \(Q\) is not a proper subdistribution.

If no such open interval exists then \(Q\) is orthogonal to \(e_1\) and \(e_2\) on a dense set of points and thus everywhere by continuity. 
Then \( Q^\perp \) contains \( e_1 \) and \( e_2 \), and therefore the entire nullity distribution as well, so again this splitting is trivial. 
\end{proof}

\begin{rems}
	\begin{enumerate}[label = (\alph*)]
		\item The proof shows that if \( f_i(x) \equiv 0 \) for some \( i \), then \( M \) is an isometric product \( N^{i+1} \times \R^{n-i+1} \) where \( N^{i+1} \) is conullity two and with a flat metric on \( \R^{n-i+1} \).
		\item The condition in \ref{item:bfunlocirred} that no \(f_i^{-1}(0)\) contains an open subset is equivalent to the condition that \(\bigcup_{i} f_i^{-1}(0)\) contains no open subsets by the Baire category theorem.
		\item Notice that \( \bfun_{\uu \uu} \neq 0 \) is only really needed to conclude that \( g \) has conullity two and to ensure that \( e_1 \) and \( e_2 \) are well-defined. 
		By removing this assumption we may glue locally irreducible strips where \( \bfun_{\uu \uu} \neq 0 \) with flat strips where \( \bfun_{\uu \uu} = 0 \). 
	\end{enumerate}
\end{rems}

\subsection{Completeness of the metric}
\begin{lem} \label{lem:bfuncompleteness}
The manifold \((c_1, c_2) \times \R^{n+1}\) with the metric \eqref{eq:bmetric} is complete if \((c_1, c_2) = \R\) and there exists a positive continuous function \(r(\uu)\) such that \(\bfun(x,\uu) \geq r(\uu)\) for all \(x, \uu\). 
\end{lem}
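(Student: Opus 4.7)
The plan is to apply the Hopf--Rinow theorem in the form: $(M, g_{\bfun, f_i})$ is complete iff every curve $\gamma \colon [0, T) \to M$ of finite length extends continuously to $[0, T]$. Since $(c_1, c_2) = \R$, it is enough to show that any such $\gamma$ stays in a subset of $\R^{n+2}$ that is compact in the ambient Euclidean topology.

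Writing $\gamma(t) = (x(t), \uu(t), \vv{1}(t), \ldots, \vv{n}(t))$ and $\alpha_j \coloneqq \vv{j-1} f_j - \vv{j+1} f_{j+1}$, the orthonormal coframe from Theorem~\ref{lem:bfunmetricprops}\ref{item:orthonormal} gives
\begin{equation*}
\abs{\dot\gamma}^2 = \bfun^2 \dot x^2 + \sum_{j=0}^n \bigl(\dot{\vv{j}} + \alpha_j \dot x\bigr)^2,
\end{equation*}
with the usual convention $\vv{0} = \uu$. The heart of the argument is to bound $R(t)^2 \coloneqq \sum_{j=0}^n \vv{j}(t)^2$ along $\gamma$. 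The crucial observation is the telescoping identity
\begin{equation*}
\sum_{j=0}^n \vv{j}\,\alpha_j = \sum_{j=1}^n \vv{j}\vv{j-1} f_j - \sum_{j=1}^n \vv{j-1}\vv{j} f_j = 0,
\end{equation*}
visible after a single index shift in the second sum. Using this, differentiating $R^2$ and applying Cauchy--Schwarz together with the orthonormal-frame bound $\sum_j (\dot{\vv{j}} + \alpha_j \dot x)^2 \le \abs{\dot\gamma}^2$ yields $\abs{\dot R} \le \abs{\dot\gamma}$, whence $R(t) \le R(0) + L(\gamma)$. In particular $\uu$ and every $\vv{j}$ remain bounded along $\gamma$.

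Once $\abs{\uu(t)} \le C$ for some $C$, set $r_0 \coloneqq \min_{\abs{\uu}\le C} r(\uu) > 0$, so that $\bfun(x(t), \uu(t)) \ge r_0$ along $\gamma$. Then $\abs{\dot x} \le \abs{\dot\gamma}/r_0$, so the total variation of $x$ is bounded by $L(\gamma)/r_0$. Because $(c_1, c_2) = \R$, this confines $\gamma$ to a Euclidean-compact subset of $\R^{n+2}$, and the continuous extension to $t = T$ follows, proving completeness.

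The main obstacle is spotting the telescoping cancellation. Without it, the coframe estimate $\abs{\dot{\uu} - \vv{1} f_1 \dot x} \le \abs{\dot\gamma}$ couples $\dot{\uu}$ to $\dot x$, and since the natural bound $\abs{\dot x} \le \abs{\dot\gamma}/r(\uu)$ requires $\uu$ to be controlled first, one is caught in a loop between the $\uu$ and $x$ estimates. The identity $\sum \vv{j}\alpha_j = 0$ breaks this circularity by producing a single scalar $R$ that is Lipschitz with constant $\abs{\dot\gamma}$, independently of $\bfun$ and the $f_j$.
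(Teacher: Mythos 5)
Your argument is correct, but it takes a different route from the paper. The paper performs the Kowalski change of variables $\theta = S(x)V$, where $S' = SA$ with $A(x)$ the skew-symmetric matrix assembled from the $f_i$, so that $S(x) \in SO(n+1)$ and the metric becomes the warped product $\bfun^2\dd{x}^2 + \dd{\theta_0}^2 + \cdots + \dd{\theta_n}^2$; boundedness of the $\theta_i$ (hence of $\uu$, since $S$ is orthogonal) along a finite-length path then gives the lower bound on $\bfun$ and the bound on $x$. Your telescoping identity $\sum_j \vv{j}\alpha_j = 0$ is precisely the skew-symmetry $V^T A V = 0$ of that same matrix, so the two proofs exploit the same algebraic fact; the difference is that you convert it directly into the differential inequality $\abs{\dot R} \le \abs{\dot\gamma}$ for $R = \abs{V}$, avoiding the ODE for $S$ and the coordinate change altogether. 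This makes your proof more elementary and self-contained, whereas the paper's transformation additionally exhibits the explicit warped-product form of the metric, which it reuses later (for instance in the completeness argument in the proof of Theorem \ref{thm:smoothnessresultinintro}). One small point to tighten: containment in a Euclidean-compact set does not by itself give a continuous extension to $t = T$, only limit points; either invoke the completeness criterion in the form ``every divergent curve has infinite length,'' for which your compactness statement suffices, or note that $x$ has total variation at most $L/r_0$ and then, since the $\vv{j}$ and the $f_j(x)$ are bounded along $\gamma$, each $\abs{\dot{\vv{j}}} \le \abs{\dot\gamma} + \abs{\alpha_j}\abs{\dot x}$ is integrable, so every coordinate has finite variation and the limit at $t = T$ exists.
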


\begin{proof}
We use a coordinate change from \cite{Kowalski:1992} that turns our metric into a warped product. 
Define the \((n+1) \times (n+1)\) matrix \(A(x)\) to be
\begin{equation*} 
	A(x) = \begin{pmatrix} 0 & -f_1(x) & 0 &  \cdots & \cdots & 0 \\
		f_1(x) & 0 & -f_2(x) &  \cdots & \cdots & 0 \\
		0 & f_2(x) & 0 &  \ddots &  & 0 \\
		\vdots & \vdots & \ddots &  \ddots & \ddots & \vdots \\
		\vdots & \vdots &  &  \ddots & 0 & -f_{n}(x) \\
		0 & 0 & 0 &  \cdots & f_{n} (x) & 0
	\end{pmatrix},
\end{equation*}
and then consider the matrix differential equation \(S'(x) = S(x)A(x)\) with initial condition \(S(0) = Id_{n+1}\).
Since \(A\) is skew symmetric and \((S^T S)(0) = Id\), it follows that \( S(x) \in SO(n+1) \).
Now consider the coordinate change
\begin{equation*}
\begin{pmatrix} \theta_0 \\ \theta_1 \\ \vdots \\ \theta_{n} \end{pmatrix} = S(x) \begin{pmatrix} \uu \\ \vv{1} \\ \vdots \\ \vv{n} \end{pmatrix}.
\end{equation*}
Letting
\begin{equation*}
\dd\theta = \begin{pmatrix} \dd\theta_0 \\ \dd\theta_1 \\ \vdots \\ \dd\theta_{n} \end{pmatrix}, \, V = \begin{pmatrix} \uu \\ \vv{1} \\ \vdots \\ \vv{n} \end{pmatrix}, \qand \dd V = \begin{pmatrix} \dd \uu \\ \dd \vv{1} \\ \vdots \\ \dd \vv{n} \end{pmatrix},
\end{equation*}
we have
\begin{equation*}
\dd\theta = S' V \dd x + S\dd V = SA V \dd{x} + S \dd{V} = S\left(AV \dd{x} + \dd{V}\right),
\end{equation*}
and hence
\begin{align*}
\dd\theta_0^2 + \dots + \dd\theta_{n}^2 &= (\dd \theta)^T \dd\theta = \left(AV \dd{x} + \dd{V}\right)^T S^T S \left(AV \dd{x} + \dd{V}\right) \nonumber \\ 
&= \left(AV \dd{x} + \dd{V}\right)^T \left(AV \dd{x} + \dd{V}\right).
\end{align*}
With this coordinate change our metric becomes the warped product
\begin{equation}\label{eq:warpedmetric}
g = \bfun(x,u)^2 \dd{x}^2 + \dd{\theta_0}^2 + \dots + \dd{\theta_{n}}^2,
\end{equation}
where \(\uu\) can be obtained from the first entry in \(S^{-1}(x)\theta\).

To see that \( g \) is complete, suppose that \(\gamma\) is a path in \(\R^{n+2}\) with finite length \(L\) but no limit point. 
Then since our metric is described by \eqref{eq:warpedmetric} we know that the \(\theta_i\) coordinates must all be bounded along the path \(\gamma\). 
Since \( S(x) \in SO(n+1) \), the \(\uu\) coordinate must also be bounded\textemdash say \(\abs{\uu} \leq R\) on \(\gamma\) for some \(R \in \R\). 
Now let
\begin{equation*}
M = \min_{\uu \in [-R,R]} r(\uu).
\end{equation*}
Then since \(\bfun(x,\uu) \geq r(\uu)\) it follows that \(\bfun(x,\uu) \geq M > 0\) on \(\gamma\). 
Then
\begin{equation*}
L \geq \int_\gamma \abs{\bfun(x,\uu)} \abs{\dot{x}} \dd{t} \geq \int_\gamma M \abs{\dot{x}} \dd{t} 
\end{equation*}
and hence the \(x\) coordinate must also be bounded as it cannot change more than \(\frac{L}{M}\).
However, if all coordinates are bounded then \(\gamma\) must have a limit in \(\R^{n+2}\). 
Hence our metric is complete. 
\end{proof}

\begin{rem}
	If \((c_1, c_2)\) is not all of \(\R\) then we can show the metric is not complete.
	Without loss of generality take \(a_1 \leq 0 \leq a_2 < \infty\). 
	Consider \(\gamma(x) = (x,0,0, \dots, 0)\) for \(x \in [0, a_2)\). 
	The length of this path is \(\int_0^{a_2} \abs{\bfun(x,0)} \dd{x} = \int_0^{a_2} \dd{x} = a_2 < \infty\) but has no limit in \((c_1, c_2) \times \R^{n+1}\). 
\end{rem}

\begin{rem}
Imposing an upper bound on the scalar curvature of say \(-2\) is not sufficient for completeness. 
An example is \(\bfun(x,u) = e^{-(x^2+1)u}\), which has \(\textrm{Scal} \leq -2\) but is not complete. 
It is critical to bound the geodesic curvature \(\kgam\) along the path \(\uu = \vv{i} = 0\) as well.
\end{rem}

Since Lemma \ref{lem:bfunmetricprops} tells us that the scalar curvature is given by \(-2\frac{\bfun_{\uu \uu}(x,\uu)}{\bfun(x,\uu)}\), it is a function of only \(x\) and \(\uu\) so we may write it as \(\scal(x,\uu)\). 

\begin{prop}\label{prop:bfuncurvbounds}
	If
	\begin{equation*}
		\scal < 0, \quad \abs{\kgam (x)} \leq \sqrt{\frac{1}{2} \abs{\scal(x,\uu)}}, \qand \abs{\kgam (x)} \leq \Lambda
	\end{equation*}
	for all \( x\) and \( \uu \) and for some constant \( \Lambda \), then the metric on \( \R^{n+2} \) is complete.
\end{prop}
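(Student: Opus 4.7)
The plan is to verify the hypotheses of Lemma \ref{lem:bfuncompleteness}. Since the metric already lives on \( \R^{n+2} \) we have \( (c_1, c_2) = \R \), and it remains to produce a continuous positive function \( r(\uu) \) with \( \bfun(x, \uu) \geq r(\uu) \) for all \( x, \uu \). The strategy is to compare \( \bfun(x, \cdot) \) to the piecewise exponential barrier \( e^{-|\kgam(x)||\uu|} \) and then use \( |\kgam| \leq \Lambda \) to eliminate the \( x \)-dependence, taking \( r(\uu) = e^{-\Lambda|\uu|} \).

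For fixed \( x \), part (a) of Lemma \ref{lem:bfunmetricprops} gives the ODE \( \bfun_{\uu\uu} = q\,\bfun \) with \( q(x, \uu) = -\scal(x, \uu)/2 > 0 \), and part (f) provides the initial data \( \bfun(x, 0) = 1 \), \( \bfun_{\uu}(x, 0) = -\kgam(x) \). The hypothesis \( |\kgam(x)| \leq \sqrt{|\scal(x, \uu)|/2} \) then reads \( q(x, \uu) \geq \kgam(x)^2 \) for every \( \uu \). Set \( \psi(\uu) := e^{-|\kgam(x)||\uu|} \), which solves \( \psi_{\uu\uu} = \kgam(x)^2 \psi \) on each of \( (-\infty, 0) \) and \( (0, \infty) \) with \( \psi(0) = 1 \). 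The Wronskian \( W := \psi\,\bfun_{\uu} - \bfun\,\psi_{\uu} \) satisfies
\[
W'(\uu) = (q(x,\uu) - \kgam(x)^2)\,\bfun(x,\uu)\,\psi(\uu) \geq 0
\]
wherever \( \bfun \) and \( \psi \) are positive, while a direct one-sided computation at the origin gives \( W(0^+) = |\kgam(x)| - \kgam(x) \geq 0 \) and \( W(0^-) = -\kgam(x) - |\kgam(x)| \leq 0 \). Hence \( W \geq 0 \) on \( [0, \infty) \) and \( W \leq 0 \) on \( (-\infty, 0] \); dividing by \( \bfun\,\psi \) and integrating from \( 0 \) to \( \uu \) yields \( \bfun(x, \uu) \geq \psi(\uu) \) on both half-lines. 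Since \( |\kgam(x)| \leq \Lambda \), we obtain \( \bfun(x, \uu) \geq e^{-\Lambda|\uu|} \), and Lemma \ref{lem:bfuncompleteness} applies with \( r(\uu) = e^{-\Lambda|\uu|} \).

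The main technical point is ensuring that the Wronskian comparison is valid in the first place, i.e., that \( \bfun(x, \cdot) \) remains positive on all of \( \R \). I would handle this by a standard ``first zero'' contradiction: if \( \uu^* > 0 \) were the smallest positive zero of \( \bfun(x, \cdot) \), then the Sturm argument applied on \( [0, \uu^*) \) and passed to the limit would give \( \bfun(x, \uu^*) \geq e^{-|\kgam(x)|\uu^*} > 0 \), which is impossible; the negative half-line is symmetric. The other minor wrinkle is the corner of \( \psi \) at \( \uu = 0 \), which is precisely why the Wronskian sign must be checked separately on the two half-lines using the correct one-sided derivative of \( \psi \).
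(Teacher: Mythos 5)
Your proposal is correct and follows essentially the same route as the paper: both reduce to Lemma \ref{lem:bfuncompleteness} by producing the uniform lower bound \( \bfun(x,\uu) \geq e^{-\Lambda\abs{\uu}} \) from the equation \( \bfun_{\uu\uu} = -\tfrac{1}{2}\scal\,\bfun \) with the initial data \( \bfun(x,0)=1 \), \( \abs{\bfun_\uu(x,0)} = \abs{\kgam(x)} \). The only difference is cosmetic: the paper phrases the comparison geometrically, viewing \( \bfun(x,t)e_2 \) as a Jacobi field and invoking Jacobi field comparison to get \( \bfun \geq \cosh(\lambda t) + \tfrac{\kgam}{\lambda}\sinh(\lambda t) \geq e^{-\lambda\abs{\uu}} \), whereas you run the equivalent Sturm--Wronskian argument directly on the ODE against the barrier \( e^{-\abs{\kgam(x)}\abs{\uu}} \), with the first-zero argument correctly handling positivity of \( \bfun \).
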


\begin{proof}
We will prove this using the equivalent assumption that there is some positive function \( \lambda(x) \) such that 
\begin{equation*}
	\scal(x,\uu) \leq -2 \lambda(x)^2 \quad \text{and} \quad \abs{\kgam} \leq \lambda(x) \leq \Lambda.
\end{equation*} 

Consider the geodesic \(\phi(t) = (x, t, 0, \dots, 0)\). 
Since \(\bfun\) satisfies a Jacobi equation
\begin{equation*}
\bfun_{\uu \uu} + \sec(e_1, e_2) \bfun = 0,
\end{equation*}
for each \(x\),  \(J(t) = \bfun(x, t) e_2(\phi(t))\) is a Jacobi field along \(\phi\) as \( \nabla_{e_1} e_2 = 0 \). 
Note that \(\abs{J(0)} = \abs{\bfun(x,0)} = 1\) and \(\abs{J}'(0) = \bfun_{\uu}(x,0) = \kgam(x)\). 
 
Since \(\sec(e_1, e_2) = \frac{1}{2} \scal(x,\uu) \leq -\lambda(x)^2\), Jacobi field comparison imply that
\begin{align*}
\abs{J(t)} = \abs{\bfun(x,t)} &\geq \abs{J(0)} \cosh(\lambda(x)t) + \frac{\abs{J}'(0)}{\lambda(x)} \sinh(\lambda(x)t) \\
&= \cosh(\lambda(x)t) + \frac{\kgam}{\lambda(x)} \sinh(\lambda(x)t).
\end{align*}
Since \(\abs{\kgam} \leq \lambda(x)\), this is bounded below by \(e^{-\lambda(x) \abs{\uu}}\), and as \(\lambda(x) \leq \Lambda\) we can take \(r(u) = e^{-\Lambda \abs{\uu}}\).
Thus Lemma \ref{lem:bfuncompleteness} shows the metric is complete.
\end{proof}

This gives completeness in terms of an inequality between the geodesic curvature \(\kgam\) and the scalar curvature; however, we still require a uniform bound on the geodesic curvature.
It turns out we can remove the need for this bound if we strengthen the inequality.

\begin{prop}\label{prop:epsiloncomplete}
	If \( \scal < 0 \) and if there is an \( \varepsilon > 0 \) such that
	\begin{equation*}
		\abs{k_\gamma(x)} \leq (1-\varepsilon) \sqrt{\frac{1}{2} \abs{\scal(x,\uu)}},
	\end{equation*}
	then the metric on \(\R^{n+2}\) is complete.
\end{prop}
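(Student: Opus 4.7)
The plan is to follow the same strategy as in Proposition \ref{prop:bfuncurvbounds}: establish a pointwise lower bound of the form $\bfun(x,\uu) \geq r(\uu) > 0$ with $r$ positive and continuous, then invoke Lemma \ref{lem:bfuncompleteness}. The new ingredient is that the strict inequality with margin $\varepsilon > 0$ allows us to choose the comparison curvature adapted to each $x$, removing the need for a uniform upper bound on $\kgam$.

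Concretely, for each $x$ I would define $\lambda_0(x) = |\kgam(x)|/(1-\varepsilon)$. The hypothesis implies $\lambda_0(x) \leq \sqrt{|\scal(x,\uu)|/2}$ for every $\uu$, so the same Jacobi comparison applied to $J(t) = \bfun(x,t) e_2$ as in Proposition \ref{prop:bfuncurvbounds} (with model curvature $-\lambda_0^2$) gives, for $\uu \geq 0$,
\[
\bfun(x,\uu) \geq \cosh(\lambda_0 \uu) + \frac{\kgam(x)}{\lambda_0(x)} \sinh(\lambda_0 \uu),
\]
valid as long as the right-hand side stays positive. By construction $|\kgam(x)/\lambda_0(x)| = 1-\varepsilon$, so the worst case reduces to
\[
\cosh(\lambda_0 \uu) - (1-\varepsilon)\sinh(\lambda_0 \uu) = \tfrac{\varepsilon}{2} e^{\lambda_0 \uu} + \tfrac{2-\varepsilon}{2} e^{-\lambda_0 \uu},
\]
which is always strictly positive (justifying the comparison for all $\uu \geq 0$) and, by AM-GM, is bounded below by $\sqrt{\varepsilon(2-\varepsilon)}$, independently of $x$ and $\uu$. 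The degenerate case $\kgam(x) = 0$ is immediate from $\bfun(x,\uu) \geq \cosh(0) = 1$, and an analogous argument for $\uu \leq 0$ (or reversing the comparison geodesic) yields the same uniform bound.

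Taking $r(\uu) = \sqrt{\varepsilon(2-\varepsilon)}$, a positive constant (hence continuous) function, gives $\bfun(x,\uu) \geq r(\uu)$ for all $x$ and $\uu$, and Lemma \ref{lem:bfuncompleteness} then yields completeness. I do not anticipate any substantial obstacle: the essential insight is just that the margin $\varepsilon > 0$ enables the adaptive choice of $\lambda_0(x)$, forcing $|\kgam/\lambda_0| = 1-\varepsilon < 1$ and hence producing a comparison Jacobi field which is a positive-coefficient sum of exponentials, with no zeros and a uniform positive lower bound. This is exactly what lets us bypass the extra assumption $|\kgam| \leq \Lambda$ used in Proposition \ref{prop:bfuncurvbounds}.
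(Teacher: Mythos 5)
Your proposal is correct and follows essentially the same route as the paper: a Jacobi field comparison as in Proposition \ref{prop:bfuncurvbounds} with the comparison curvature $-\lambda(x)^2$ chosen adapted to $x$ so that $\abs{\kgam}/\lambda \leq 1-\varepsilon$, yielding a uniform positive lower bound for $\bfun$ and hence completeness via Lemma \ref{lem:bfuncompleteness}. Your explicit constant $\sqrt{\varepsilon(2-\varepsilon)} = \sqrt{1-(1-\varepsilon)^2}$ is in fact the sharp lower bound for the comparison function, and your separate treatment of the degenerate case $\kgam(x)=0$ is a sensible added detail.
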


\begin{proof}
We prove this again under the equivalent assumptions that there exists a positive function \( \lambda(x) \) such that
\begin{equation*}
	\scal(x,\uu) \leq -2\lambda(x)^2 \quad \text{and} \quad \frac{\abs{\kgam}}{\lambda(x)} \leq 1-\varepsilon.
\end{equation*}
As in the proof of Proposition \ref{prop:bfuncurvbounds}, we get
\begin{equation*}
\abs{J(t)} \geq \cosh(\lambda(x)t) + \frac{\kgam}{\lambda(x)} \sinh(\lambda(x)t) =: f(t). 
\end{equation*}
Thus, since \(\frac{\abs{\kgam}}{\lambda(x)} \leq 1-\varepsilon\), the Jacobi field is bounded away from 0. 
In fact, straightforward calculation shows that the \( f(t) \geq \sqrt{1-\varepsilon^2} \) and hence Lemma \ref{lem:bfuncompleteness} implies that the metric is complete.
\end{proof}

One simple case is if we set \(\lambda(x) = 1\), in which we get:
\begin{cor}\label{cor:pathcurvgivescomplete}
If \(\scal \leq -2\) and \(\abs{\kgam} \leq 1\), then the metric is complete.
\end{cor}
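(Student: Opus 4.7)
The plan is to reduce Corollary \ref{cor:pathcurvgivescomplete} directly to Proposition \ref{prop:bfuncurvbounds} by choosing the parameter \( \lambda(x) \equiv 1 \) and the uniform bound \( \Lambda = 1 \), as suggested by the sentence preceding the corollary. Since Proposition \ref{prop:bfuncurvbounds} already gives completeness under a non-strict inequality (in contrast to Proposition \ref{prop:epsiloncomplete}, which requires an \( \varepsilon \)-gap), the corollary should be an immediate consequence, not requiring any new Jacobi field comparison of its own.

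Concretely, I would proceed as follows. First, from the hypothesis \( \scal \leq -2 \) we obtain \( \scal(x,\uu) < 0 \) for all \( x, \uu \), so the first hypothesis of Proposition \ref{prop:bfuncurvbounds} is satisfied. Second, the same hypothesis gives
\begin{equation*}
    \sqrt{\tfrac{1}{2}\abs{\scal(x,\uu)}} \;\geq\; \sqrt{\tfrac{1}{2}\cdot 2} \;=\; 1 \;\geq\; \abs{\kgam(x)},
\end{equation*}
which is the pointwise curvature inequality needed. Third, \( \abs{\kgam(x)} \leq 1 \) supplies a uniform bound, so we may take \( \Lambda = 1 \). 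All three hypotheses of Proposition \ref{prop:bfuncurvbounds} therefore hold, and the conclusion (completeness of the metric on \( \R^{n+2} \)) follows.

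There is no real obstacle here, as the corollary is essentially a specialization of the preceding proposition. The only thing worth noting is why we invoke Proposition \ref{prop:bfuncurvbounds} rather than the sharper-looking Proposition \ref{prop:epsiloncomplete}: the latter demands the strict gap \( \abs{\kgam}/\lambda \leq 1-\varepsilon \), which our hypotheses do not provide since we allow equality \( \abs{\kgam} = 1 \). Proposition \ref{prop:bfuncurvbounds} compensates for the loss of the gap by requiring the additional uniform bound \( \Lambda \), which in our setting is automatic. With this observation in place, the proof of the corollary reduces to a single appeal to Proposition \ref{prop:bfuncurvbounds}.
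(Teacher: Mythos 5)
Your proposal is correct and matches the paper's own derivation: the corollary is stated as the special case \( \lambda(x) \equiv 1 \) of Proposition \ref{prop:bfuncurvbounds}, and your verification of its three hypotheses (with \( \Lambda = 1 \)) is exactly the intended argument. Your remark on why Proposition \ref{prop:epsiloncomplete} is not the right tool here is also accurate, since the hypotheses allow equality \( \abs{\kgam} = 1 \).
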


\begin{cor}
	If the metric is curvature homogeneous, say \( \scal = -2 \), then \( \bfun(x,\uu) = \cosh(\uu) + \kgam(x) \sinh(\uu) \) and hence the metric is complete if and only if \( \abs{\kgam} \leq 1 \). 
\end{cor}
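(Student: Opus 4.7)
The plan is to extract the explicit form of $\bfun$ from the scalar curvature ODE, then handle each direction of the completeness equivalence.

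For the form of $\bfun$, Lemma \ref{lem:bfunmetricprops}\ref{item:isametric} gives $\scal = -2\bfun_{\uu\uu}/\bfun$, so $\scal \equiv -2$ becomes the ODE $\bfun_{\uu\uu} = \bfun$ in $\uu$ for each fixed $x$. Its general solution is $A(x)\cosh(\uu) + B(x)\sinh(\uu)$, and the initial conditions $\bfun(x, 0) = 1$ and $\bfun_\uu(x, 0) = \kgam(x)$ (recorded as a Jacobi-field value in the proof of Proposition \ref{prop:bfuncurvbounds}) force $A \equiv 1$ and $B = \kgam$, giving the claimed formula.

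For the "if" direction, suppose $\abs{\kgam} \leq 1$ everywhere. Then Corollary \ref{cor:pathcurvgivescomplete} applies directly to give completeness. Equivalently, the explicit formula yields $\bfun(x, \uu) \geq \cosh(\uu) - \abs{\sinh(\uu)} = e^{-\abs{\uu}} > 0$, so Lemma \ref{lem:bfuncompleteness} applies with $r(\uu) = e^{-\abs{\uu}}$.

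For the "only if" direction, suppose $\abs{\kgam(x_0)} > 1$ at some $x_0$. Rewriting
\begin{equation*}
	\bfun(x_0, \uu) = \tfrac{1}{2}\bigl((1+\kgam(x_0))e^\uu + (1-\kgam(x_0))e^{-\uu}\bigr),
\end{equation*}
the two coefficients have opposite signs, so $\bfun(x_0, \cdot)$ vanishes at some finite $\uu_* \neq 0$. Consequently \eqref{eq:bmetric} defines a Riemannian metric only on the proper open subset $U = \{\bfun > 0\} \subsetneq \R^{n+2}$. Since $g_{\uu\uu} = 1$, the coordinate segment $\uu \mapsto (x_0, \uu, 0, \dots, 0)$ joining $\uu = 0$ to a point arbitrarily close to $\uu_*$ has length bounded by $\abs{\uu_*} < \infty$, producing a Cauchy sequence in $U$ with no limit in $U$. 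Hence $U$, and therefore any Riemannian manifold carrying such a metric, is incomplete. The main subtlety is recognizing that $\abs{\kgam} > 1$ actually prevents \eqref{eq:bmetric} from defining a Riemannian metric on all of $\R^{n+2}$; once the problem is reduced to showing the maximal Riemannian subset is incomplete, the trivial length computation above concludes.
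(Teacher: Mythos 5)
Your proposal is correct and follows essentially the same route as the paper: solve \( \bfun_{\uu\uu} = \bfun \) with \( \bfun(x,0)=1 \), \( \bfun_\uu(x,0)=\kgam(x) \), get completeness for \( \abs{\kgam}\leq 1 \) from Corollary \ref{cor:pathcurvgivescomplete} (or Lemma \ref{lem:bfuncompleteness} with \( r(\uu)=e^{-\abs{\uu}} \)), and for the converse observe that \( \abs{\kgam}>1 \) forces \( \bfun \) to vanish at finite \( \uu \). The only difference is cosmetic: where the paper cites Rauch comparison, you read the vanishing off the explicit formula (equivalent here since \( \sec(e_1,e_2)\equiv -1 \)), and you spell out the incompleteness the paper leaves implicit via the finite-length segment in the \( \uu \)-direction reaching the zero set of \( \bfun \).
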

For the converse we just observe that \( \abs{\kgam} > 1 \) at some point implies that \( \bfun \) must vanish somewhere by the Rauch comparison theorem.


\section{Proof of Theorem \ref{thm:themetriconaV}}\label{sec:onmanifold}
Recall that we are considering a complete simply connected conullity two manifold \(M\) and a connected component \( V \) of \(M_F\) where all \(a_i \neq 0\). 

\begin{proof}
	Pick \( p \in M_F \) and let \( \gamma \colon (c_1, c_2) \to V \) be a maximal integral curve of \( e_2 \) through \( p \) contained in \( V \).
	By the construction in section \ref{sec:prelim} we have well-defined vectors \( e_1\), \(T_i \) and functions \( a_i \) for \( i = 1, \dots, n \).
	Let \( f_i(x) \coloneqq a_i(\gamma(x)) \).
	Now we define coordinates on \( V \) by \( \phi \colon (c_1, c_2) \times \R^{n+1} \to V \) where
	\begin{equation}\label{eq:coordsonV}
		\phi(x, \uu, \vv{1}, \dots, \vv{n}) = \exp_{\gamma(x)}(\uu e_1 + \vv{i} T_i).
	\end{equation}
We claim that \( \phi \) is an isometry from \( N \coloneqq (c_1, c_2) \times \R^{n+1} \) equipped with the metric \eqref{eq:bmetric} onto \( V \). 
Notice that \( \phi \) is a bijection since Proposition \ref{prop:curvehitsonce} guarantees that each leaf of \( \mathcal{F} \) intersects \( \gamma \) exactly once and \( \{e_1, T_i\} \) span the target space of \( \mathcal{F} \).
It remains to show that \( \phi \) is a local isometry. 

The existence of solutions to ODEs gives \( \bfun \colon (c_1, c_2) \times \R \to \R \) as the function which satisfies \( \bfun(x,0) = 1 \), \( \bfun_{\uu}(x,0) = -k_{\gamma}(x) \), and \( \bfun_{\uu \uu}(x,\uu) = -\frac{1}{2}\bfun(x,\uu)\scal(x,\uu)  \), where \( k_\gamma \) is the geodesic curvature of the curve \( \gamma \) and \( \scal(x,\uu) \) is the scalar curvature of \( V \) at \( \exp_{\gamma(x)}(\uu e_1) \).

The coordinates \ref{eq:coordsonV} define vector fields \(\pdv{x}\), \(\pdv{\uu}\), and \(\pdv{\vv{i}}\) on \(V\).
Computations with Jacobi fields show that \( \pdv{\uu} \) and \( \pdv{\vv{i}} \) at \( \exp_{\gamma(x)}(ue_1 + v_i T_i) \) are the parallel translations of \( e_1 \) and \( T_i \) along the geodesic \(  \exp_{\gamma(x)}(t(ue_1 + v_i T_i)) \). 
By propositions \ref{prop:initialcurvcomps} and \ref{prop:niceremainingT}, \( \pdv{\uu} = e_1 \) and \( \pdv{\vv{i}} = T_i \). 

Now that we've verified \( \pdv{\uu} = e_1 \) and \( \pdv{\vv{i}} = T_i \) we use Jacobi fields again to compute \(\pdv{\phi}{x}\). 
Fix \(p = (x, {\uu}, {\vv{i}})\) and consider the family of geodesics
\begin{equation}
	\alpha_s(t) = \phi(x + s, t {\uu}, t {\vv{i}}).
\end{equation}
Let \(J(t)\) be the Jacobi field along \(\alpha_0(t)\) corresponding to the variation \(\alpha_s\). 
Then we have
\begin{equation*}
	J(0) = \gamma'(0) = e_2
\end{equation*}
and also
\begin{align}\label{eq:onmfdJacobiprime}
	J'(0) &= \frac{D}{\dd s} \pdv{\alpha_s}{t} \eval_{s=0, t=0} = \nabla_{e_2}({\uu} e_1 + {\vv{i}} T_i) \\
	&= -{\uu} \beta e_2 - a_1 {\vv{1}} e_1 + ({\uu} a_1 - {\vv{2}} a_2) T_1 \nonumber \\
	& \quad + \sum_{i=2}^{n-1} ({\vv{i-1}}a_i - {\vv{i+1}}a_{i+1})T_i +  {\vv{n-1}} a_{n} T_{n}. \nonumber
\end{align}
From the Jacobi equation, the only nonzero component of \(J''\) is the \(e_2\) component since all other curvature terms become zero. 
Note that \(\dot{\alpha}_0(t) = {\uu} \pdv{\uu} + {\vv{i}} \pdv{\vv{i}}\) but only the first term survives after substituting into the curvature tensor.
Let \(f(t)\) be the \(e_2\) component of \(J(t)\).
Then we have
\begin{equation*}
	f''(t) = \expval{J''(t), e_2} = -\expval{R\left(J(t), {\uu} \pdv{\uu}\right) {\uu} \pdv{\uu}, e_2} = -u^2 \sec(e_1, e_2) f(t).
\end{equation*}
Observe that \( f(t) = \bfun(x, t\uu) \) satisfies the Jacobi equation.
Keeping in mind that the \( a_i \) in \ref{eq:onmfdJacobiprime} are evaluated at \( \gamma(x) \) and \( a_i(\gamma(x)) = f_i(x) \), the Jacobi field is given by
\begin{equation*}
	J(t) = \bfun(x, t\uu) e_2 - t \vv{1} f_1(x) e_1 
	+ \sum_{i=1}^{n} t({\vv{i-1}}f_i(x) - {\vv{i+1}}f_{i+1}(x))T_i. 
\end{equation*}
Thus,
\begin{equation*}
	\pdv{\phi}{x}\eval_p = J(1) = \bfun(x,\uu) e_2 - {\vv{1}} f_1(x)e_1
	+ \sum_{i=1}^{n} ({\vv{i-1}} f_i(x) - {\vv{i+1}} f_{i+1}(x))T_i,  
\end{equation*}
and hence
\begin{align*}
	\expval{\pdv{\phi}{x}, \pdv{\phi}{x}}_p &= (\bfun(x,\uu))^2 
	+ \sum_{i=0}^{n} ({\vv{i-1}} f_i(x) - {\vv{i+1}} f_{i+1}(x))^2  \\ 
	\expval{\pdv{\phi}{x}, \pdv{\phi}{\uu}}_p &= -{\vv{1}} f_1(x). \\
	\expval{\pdv{\phi}{x}, \pdv{\phi}{\vv{i}}}_p &= {\vv{i-1}} f_i(x) - {\vv{i+1}} f_{i+1}(x). 
\end{align*}
Altogether, we have shown that \( \phi \) is a local isometry, and hence an isometry. 
One should note that we must have \( \bfun > 0 \) since \( \bfun \) cannot vanish.
If it did so along some path \( t \mapsto \bfun(x,t) \) then \( \bfun \) would have to change sign.
This would imply that nearby leaves of the foliation \( \leaff \) would cross each other which cannot happen. 
\end{proof}


\section{Foliations of Surfaces and the Smoothness of our Metric} \label{sec:foliations}
In order to construct complete conullity 2 manifolds and prove Theorem \ref{thm:smoothnessresultinintro} we start by investigating foliations of surfaces by geodesics. 

In \cite{Brooks:2021}, Brooks showed that if \(H \colon \R \to \R\) is a locally Lipschitz function and \(\Sigma\) is a complete surface, then for any \((p_0, v_0) \in T_1\Sigma\) there exists a unique arc-length parametrized \(C^{1,1}\) curve \(\gamma \colon \R \to \Sigma\) with \( \gamma(0) = p_0 \), \( \gamma'(0) = v_0 \) whose turning angle at \(\gamma(x)\) is \(H(x)\). 
Furthermore, if \( \Sigma = \mathbb{H}^2 \) the geodesics orthogonal to \( \gamma \) foliate \( \mathbb{H}^2 \) if and only if it has Lipschitz constant 1. 
We now generalize this latter result to a complete simply connected surface \(\Sigma\) with negative curvature \(\scal \leq -2\).
For a curve \(\gamma\), let \(X\) be a unit vector field along \(\gamma\) orthogonal to \(\dot{\gamma}\) and define the family of geodesics
\begin{equation}\label{eq:sigmacoords}
	\sigma_x(\uu) = \sigma(x,\uu) \coloneqq \exp_{\gamma(x)}^\perp(\uu X).
\end{equation}
Fixing \(x\) and varying \(\uu\) traces out geodesics orthogonal to \(\gamma\). 

\begin{prop}\label{prop:curvesfoliate}
	Let \(\Sigma\) be a complete simply connected surface with \( \scal \leq -2 \).
	Fix a point \(p \in \Sigma\) and a unit vector \(v \in T_p \Sigma\). 
	There exists an injective mapping from Lipschitz functions \(H: \R \to \R\) with Lipschitz constant 1 to arc-length parametrized curves \(\gamma\) such that:
	\begin{enumerate}[label = (\alph*)]
		\item
		\(\gamma(0) = p\) and \(\gamma'(0) = v\).
		\item
		The geodesics \(\sigma_x \colon \uu \mapsto \exp_{\gamma(x)}^\perp(\uu X)\) form a foliation of \(\Sigma\). 
	\end{enumerate}
	Here \(H\) is the turning angle of \(\gamma\) and \( H' \) is the geodesic curvature \( \kgam \), defined almost everywhere.
	Furthermore, the metric in the \( (x,\uu) \) coordinates defined by \ref{eq:sigmacoords} is of the form \(\bfun(x,\uu)^2 \dd{x}^2 + \dd{\uu}^2\).
\end{prop}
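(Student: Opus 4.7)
The plan is to obtain $\gamma$ from \cite{Brooks:2021} and then show that the orthogonal exponential map $\sigma \colon \R^2 \to \Sigma$ is a global $C^{1,1}$ diffeomorphism by combining a Jacobi field estimate with Hopf--Rinow. First I would invoke the existence result of \cite{Brooks:2021}: for the given $(p,v)$ and any Lipschitz $H$ there is a unique arc-length parametrized $C^{1,1}$ curve $\gamma$ with $\gamma(0)=p$, $\gamma'(0)=v$, and turning angle $H$; its geodesic curvature is $\kgam = H'$ almost everywhere, so $\abs{\kgam} \leq 1$ a.e. Completeness of $\Sigma$ guarantees that $\sigma(x,\uu) = \exp_{\gamma(x)}^\perp(\uu X(x))$ is defined on all of $\R^2$ with each $\sigma_x$ a complete geodesic.

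The core computation is the Jacobi field $J(\uu) = \partial_x \sigma(x,\uu)$ along $\sigma_x$, with initial data $J(0) = \gamma'(x)$ and $J'(0) = \nabla_{\gamma'(x)} X = -\kgam(x)\,\gamma'(x)$, both perpendicular to $\sigma_x'(0) = X(x)$. Writing $J(\uu) = f(\uu) T_\parallel(\uu)$ where $T_\parallel$ is the parallel transport of $\gamma'(x)$ along $\sigma_x$, the scalar $f$ solves $f'' + K f = 0$, $f(0) = 1$, $f'(0) = -\kgam(x)$. Since $K = \tfrac12 \scal \leq -1$ and $\abs{\kgam}\leq 1$, the same Jacobi field comparison as in Proposition \ref{prop:bfuncurvbounds} yields $f(\uu) \geq e^{-\abs{\uu}} > 0$ for all $\uu$. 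Hence $\sigma$ is a local diffeomorphism, and the Gauss lemma (together with $\langle \partial_x \sigma, \partial_\uu \sigma\rangle|_{\uu=0} = \langle \gamma'(x), X(x)\rangle = 0$) identifies the pullback metric on $\R^2$ as $\bfun(x,\uu)^2 \dd x^2 + \dd \uu^2$ with $\bfun = f$.

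To upgrade to a global diffeomorphism, I would show that this pullback metric is complete: a Cauchy sequence $(x_n,\uu_n)$ has $\abs{\uu_n - \uu_m}$ bounded by the distance, so $\uu_n$ is Cauchy in $\R$ and eventually lies in some slab $[-U,U]$; on that slab $\bfun \geq e^{-U}$, so $\abs{x_n - x_m} \leq e^{U}\, d((x_n,\uu_n),(x_m,\uu_m))$ forces $x_n$ to be Cauchy as well. Hopf--Rinow then implies that the local isometry $\sigma$ from the complete pullback Riemannian manifold onto $\Sigma$ is a covering map, and since $\Sigma$ is simply connected, $\sigma$ is a global $C^{1,1}$ diffeomorphism. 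The geodesics $\sigma_x$ therefore foliate $\Sigma$, and the metric has the claimed form. Injectivity of the assignment $H \mapsto \gamma$ is immediate from the uniqueness clause in the Brooks result, since distinct $H$ produce curves with different turning angles.

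The main technical obstacle is the low regularity of $\gamma$: since $\kgam = H'$ exists only almost everywhere, the Jacobi equation with initial data depending on $\kgam(x)$ is not quite classical as a function of $x$. I expect to resolve this by smooth approximation, replacing $H$ by mollifications $H_\varepsilon$ with Lipschitz constant $\leq 1$ converging uniformly on compacts to $H$, applying the argument above to the resulting smooth $\gamma_\varepsilon$, and passing to the limit using that the bound $f(\uu) \geq e^{-\abs{\uu}}$ is uniform over $\abs{\kgam}\leq 1$ and that ODE solutions depend continuously on initial data.
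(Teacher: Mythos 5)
Your proposal is correct in substance but takes a genuinely different route from the paper for the foliation statement. The paper shows that for each \(q \in \Sigma\) the function \(x \mapsto \cosh d(q,\gamma(x))\) is strictly convex, via the Hessian comparison \(\hess_r \geq \coth(r)\,\pi_r\) available when \(\sec \leq -1\), together with \(\abs{\kgam} \leq 1\); uniqueness of the foot point on \(\gamma\) then gives the foliation directly, and \textemdash{} importantly \textemdash{} this argument works verbatim at \(C^{1,1}\) regularity, since the convexity inequality only uses \(H' = \kgam\) almost everywhere, so no approximation is needed. You instead prove that \(\sigma\) is a local diffeomorphism from the Jacobi bound \(\bfun \geq e^{-\abs{\uu}}\) (the same comparison the paper uses in Proposition \ref{prop:bfuncurvbounds}), establish completeness of the pullback metric, and invoke the local-isometry/covering-map theorem plus simple connectivity to get a global diffeomorphism; your computation of the metric form is essentially the paper's Jacobi-field orthogonality argument. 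What your route buys is a cleaner global statement, but it costs you exactly the regularity issue you flag: the covering-map machinery requires more than a metric whose coefficient \(\bfun(x,\uu)\) is merely measurable in \(x\), and your mollification fix is only sketched at the decisive point. Injectivity is not preserved under locally uniform limits of the maps \(\sigma_\varepsilon\), and ``continuous dependence on initial data'' does not apply directly, since \(\kgam_\varepsilon = H_\varepsilon'\) converges to \(\kgam\) only almost everywhere. The repair is, however, contained in your own estimate: the uniform bound \(\bfun_\varepsilon(x,\uu) \geq e^{-\abs{\uu}}\) gives \(g_\varepsilon \geq g_0 \coloneqq e^{-2\abs{\uu}} \dd{x}^2 + \dd{\uu}^2\) as quadratic forms, hence \(d_\Sigma(\sigma_\varepsilon(z_1), \sigma_\varepsilon(z_2)) \geq d_{g_0}(z_1, z_2)\) for all \(\varepsilon\), an inequality that passes to the limit and forces injectivity of \(\sigma\) (and, combined with the bounds \(\abs{u_\varepsilon} \leq d(q,p)\) and \(d_{g_0} \to \infty\) in \(x\), surjectivity as well). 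With that observation made explicit your argument closes; alternatively, adopting the paper's convexity argument avoids the approximation step entirely.
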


\begin{proof}
	In \cite{Brooks:2021} the existence of a curve \( \gamma \) with turning angle \( H \) was proved. 
	The proof that the geodesics normal to \( \gamma \) foliate \( \Sigma \) was carried out in the case of \( \Sigma = \mathbb{H}^2 \) and we indicate how to generalize it to the case of \( \scal \leq -2 \).
	For this it is sufficient to show that we get a foliation if \( H \) has Lipschitz constant 1. 
	
	To show we have a foliation we show that for every \(p \in \Sigma\) there is a unique closest point \(q\) on \(\gamma\).
	Let \(\delta(q) \coloneqq d(p,q)\). 
	Then in a normal neighborhood centered around \(p\), which can be taken to be the entire surface \( \Sigma \), \(\delta = r\), so \(\text{grad } \delta = \text{grad } r = \partial_r = (\nabla r)^\#\).
	Then we may relate this to the Hessian by \(\hess_r(w) = \nabla_w \partial_r = \nabla_w (\text{grad } \delta)\).
	We also have that \(\nabla_{\dot{\gamma}} \dot{\gamma} = h(t) (\dot{\gamma})^\perp\) where \((\dot{\gamma})^\perp\) is a unit length vector orthogonal to \(\dot{\gamma}\). 
	
	Let \(L(q) = \cosh(\delta(q)) = \cosh(r)\). 
	Then \( (L \circ \gamma)' = \sinh(r) \expval{\partial_r, \dot{\gamma}} \) and 
	\begin{equation*}
		(L \circ \gamma)'' = \cosh(r)\expval{\partial_r, \dot{\gamma}}^2 + \sinh(r) \expval{\hess_r(\dot{\gamma}), \dot{\gamma}} + \sinh(r)\expval{\partial_r, h(t) (\dot{\gamma})^\perp}.
	\end{equation*}
	
	Since \(\sec \leq -1\),  on \(\Sigma \setminus \{p\}\) we have \(\hess_r \geq \frac{\cosh(r)}{\sinh(r)} \pi_r\) where \(\pi_r\) is the projection to the tangent space of level sets of \(r\). 
	Now \(\expval{\pi_r(\dot{\gamma}), \dot{\gamma}} = \expval{\partial_r^\perp, \dot{\gamma}}^2\), so 
	\begin{align*}
		(L \circ \gamma)'' &\geq \cosh(r)\expval{\partial_r, \dot{\gamma}}^2 + \cosh(r) \expval{\partial_r^\perp, \dot{\gamma}}^2 + h(t) \sinh(r) \expval{\partial_r, (\dot{\gamma})^\perp} \\
		&= \cosh(r) + h(t) \sinh(r) \expval{\partial_r, (\dot{\gamma})^\perp} \geq e^{-r} > 0
	\end{align*}
	where the last bound uses the fact that \(\abs{h(t)} \leq 1\) since \(H\) has Lipschitz constant 1 and \(H'(t) = h(t)\). 
	Hence \((L \circ \gamma)\) has a unique minimum, and since \(\cosh\) is a monotone function, \(\delta\) also has a unique minimum.
	
	To verify the claim about the metric, it suffices to show that \(\pdv{x}\) and \(\pdv{\uu}\) are everywhere orthogonal. 
	Fix \(x\) and consider the Jacobi field \(J(t)\) along \(\sigma_x\) corresponding to the variation of geodesics \(\sigma\). 
	Then \(J(0) = \dot{\gamma}(x) = \pdv{x}\eval_{(x,0)}\) and
	\begin{equation}
			\dot{J}(0) = \frac{D}{\dd{x}} \pdv{\sigma_x(t)}{t} \eval_{t=0} = \nabla_x X = -h(t) \dot{\gamma}(x) = -h(t) \pdv{x}\eval_{(x,0)}.
	\end{equation}
	By the Jacobi equation,
	\begin{equation}
			0 = \expval{\ddot{J}, \pdv{\uu}} + R\left(J, \dot{\sigma}_x, \dot{\sigma}_x, \pdv{\uu}\right) = \expval{\ddot{J}, \pdv{\uu}} + 0
	\end{equation}
	which combined with the initial conditions tells us that \(\expval{J, \pdv{\uu}} = 0\) everywhere.
	As \(\pdv{x}\eval_{(x,\uu)} = J(\uu)\) we have the desired result.
\end{proof}

\begin{rem}
	Given a curve \( \gamma \) whose orthogonal geodesics \( \sigma_x \) form a foliation of \( \Sigma \), its turning angle will be Lipschitz but possibly without Lipschitz constant 1.  
	In the constant curvature case, where \( \Sigma = \mathbb{H}^2 \), this is a bijection.
\end{rem}

\begin{proof}[Proof of Theorem \ref{thm:smoothnessresultinintro}]
	The proof is a straightforward generalization of the proof in \cite{Brooks:2021} and we only indicate the necessary changes. 
	
	Let \(M = \Sigma \times \R^n\) and consider the smooth product metric
	\begin{equation*}
		g_{\Sigma \times \R^n} = g_\Sigma + \dd{\vv{1}}^2 + \dots + \dd{\vv{n}}^2.
	\end{equation*}
	We will write the metric of the form \eqref{eq:bmetric} as a sum of smooth things to show that it is in fact smooth on \(M\). 
	
	Recall that \(S \subset \R\) is the open set of \(x\)-values where \(\gamma(x)\) is smooth.
	This gives two subsets: \(S_\Sigma \subset \Sigma\), the set of points whose \(x\) coordinate lies in \(S\), and \(S_M \subset M\), the set of points whose projection to \(\Sigma\) lie in \(S_\Sigma\). 
	We define smooth coordinates on each connected component of \(S_M\) by extending the \( (x,\uu) \) coordinates on \( \Sigma \) by picking an orthonormal frame \( \partial{\vv{i}} \) of \( \R^n \) along \( \gamma \) to get coordinates \( (x,\uu,\vv{i}) \).
	
	The following computations are performed in the product metric where \( a_i = 0 \) so \( \nabla_{e_2}e_2 = \beta e_1 \) and \( \nabla_{e_2}e_1 = -\beta e_2 \) are the only nonzero covariant derivatives. 
	Furthermore, here \( e_2 = \frac{1}{\bfun} \pdv{x} \).
	We now want to modify the product metric.
	On points in \(S_M\), define
	\begin{align*}
		g_{f_i} = -f_i(x) \vv{i} (\dd{x} \dd{\vv{i-1}} + \dd{\vv{i-1}}\dd{x}) &+ f_i(x) \vv{i-1}(\dd{x}\dd{\vv{i}} + \dd{\vv{i}} \dd{x}) \\
		&+ f_i(x)^2 (\vv{i-1}^2 + \vv{i}^2) \dd{x}^2 \nonumber
	\end{align*}
	and
	\begin{equation*}
		\mixedfactor{i} = -2 \vv{i-1}\vv{i+1} f_i(x) f_{i+1}(x) \dd{x}^2,
	\end{equation*}
	while on the complement of \(S_M\) let both be zero.
	Then
	\begin{equation*}
		g = g_{\Sigma \times \R^n} + \sum_{i=1}^n g_{f_i} + \sum_{i=1}^{n-1} \mixedfactor{i}
	\end{equation*}
	is the metric in \eqref{eq:bmetric} on \(S_M\), and it suffices to show that \(g_{f_i}\) and \(\mixedfactor{i}\) are smooth. 
	
	Fix smooth vector fields \(X_1\), \(X_2\), and let
	\begin{equation*}
		F_i \coloneqq g_{f_i}(X_1, X_2) \quad \text{and} \quad \mixedfactorF{i} \coloneqq \mixedfactor{i}(X_1, X_2).
	\end{equation*}
	In other words we have
	\begin{align*}
		F_i = &-\frac{f_i(x)}{\bfun(x,\uu)} \vv{i} (\expval{X_1, e_2}\expval{X_2, \partial_{\vv{i-1}}} + \expval{X_2, e_2}\expval{X_1, \partial_{\vv{i-1}}}) \\
		&+ \frac{f_i(x)}{\bfun(x,\uu)} \vv{i-1} (\expval{X_1, e_2}\expval{X_2, \partial_{\vv{i}}} + \expval{X_2, e_2}\expval{X_1, \partial_{\vv{i}}}) \nonumber \\ 
		&+ \frac{f_i(x)^2}{\bfun(x,\uu)^2}(\vv{i-1}^2 + \vv{i}^2) \expval{X_1, e_2}\expval{X_2, e_2} \nonumber
	\end{align*}
	and
	\begin{equation*}
		\mixedfactorF{i} = -2 \vv{i-1} \vv{i+1} \frac{f_i(x) f_{i+1}(x)}{\bfun(x,\uu)^2} \expval{X_1, e_2}\expval{X_2, e_2}.
	\end{equation*}
		Jacobi field estimates imply that since \( \scal \leq -2 \) and \( \bfun(x,0) = 1 \) we have \( \bfun(x,\uu) \geq \cosh \uu + \kgam \sinh \uu \) and hence \( \bfun \) is bounded away from 0 on any set where \( \uu \) is bounded.
		It is then straightforward to check that these functions and their derivatives satisfy the following properties on \( S_M \):
	\begin{enumerate}[label = (\alph*)]
		\item
		They are rational functions in \(\uu, \vv{i}\), derivatives of \(f_i(x)\) and \(\bfun(x,\uu)\), and inner products \(\expval{X, e_j}\) or \(\expval{X, \partial_{\vv{i}}}\). 
		\item
		The denominator is bounded away from zero on any set where \(\abs{u}\) is bounded.
		\item
		Each term of each numerator has some positive power of some \(f_i^{(j)}(x)\).
	\end{enumerate}
	These properties, along with the condition \eqref{eq:smoothnesscondition} in the statement of Theorem \ref{thm:smoothnessresultinintro}, are enough to show that these functions smoothly extend to \(M\) by defining them to be zero on \(M \setminus S_M\). 
	Properties (a) and (c) show that the numerators of the functions go to zero in the limit while (b) shows that the denominators are bounded away from zero.
	Therefore \(g\) is smooth.
	Furthermore, the extension to \(M \setminus S_M\) has \( \scal \leq -2 \) by continuity, so we have a smooth conullity 2 metric on \(M\). 
	
	To verify completeness we use a similar argument as that in the proof of Lemma \ref{lem:bfuncompleteness}.
	The coordinate change is identical, but in lieu of a lower bound on \( \bfun \) we use completeness of the surface \( \Sigma \) with metric \( g_\Sigma = \bfun^2 \dd{x}^2 + \dd{\uu}^2 \). 
	Given a path \( \gamma \) with finite length we still have that the \( \theta_i \) coordinates, and thus the \( \uu \) coordinate, are bounded.
	To bound the \( x \) coordinate, project to a corresponding curve \( \gamma_\Sigma \) in \( \Sigma \) via \( (x,\uu,\vv{i}) \mapsto (x,\uu) \).
	This preserves finiteness of the length.
	Then by completeness of \( \Sigma \) the \( x \) coordinate must also be bounded. 
\end{proof}

\begin{rem}
	Notice that in Theorem \ref{thm:smoothnessresultinintro}, the set \( S_M \) may be larger than the set \( M_F \) where the Frenet frame is defined since we allow the \( f_i(x) \) to vanish (even on open subsets).
	In particular we can glue pieces which are locally irreducible with some that are locally product metrics \( \Sigma \times \R^n \).
\end{rem} 

While the nullity vectors \( T_2 \), \( \dots \), \( T_n \) are not necessarily defined outside of \( M_F \), we may ask if they have one-sided limits as we approach the boundary of \( M_F \). 
They in fact do, supposing that the \( f_i \) are bounded in the region as is the case in Theorem \ref{thm:smoothnessresultinintro}.
\begin{lem}\label{lem:onesidedlimits}
	Let \(V\) be a connected component of \(M_F\).
	For \(p \in V\), construct coordinates on \(V\) as above, using \(\gamma\), an integral curve of \(e_2\) passing through \(p\).
	Let \(x_* \in \R\) be such that \(\gamma(x_*) = p_* \notin M_F\), a boundary point of \(V\).
	If the functions \(f_i\) are bounded near \(p_*\) then \(T_1, \, \dots, \, T_n\) have one-sided limits along \(\gamma\) as \(x \to x_*\).
\end{lem}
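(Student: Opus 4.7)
The plan is to reduce the question to continuous extension of a solution to a linear ODE on \(SO(n+2)\) whose coefficient matrix is bounded near \(x_*\). I work on an interval \((x_* - \varepsilon, x_*]\) on which \(\gamma\) is \(C^{1,1}\) (by Proposition \ref{prop:curvehitsonce}) and lies in \(V\) except possibly at the endpoint.

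First I would fix a parallel orthonormal frame \(E_1(x), \dots, E_{n+2}(x)\) along \(\gamma\) on this interval. Because \(\gamma\) is \(C^{1,1}\), the parallel-transport ODE has essentially bounded coefficients, so each \(E_j\) is Lipschitz in \(x\) up to and including \(x_*\); in particular each \(E_j(x)\) has a limit as \(x \to x_*\).

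Next I would express the frame \((v_1, \dots, v_{n+2}) \coloneqq (e_2, e_1, T_1, \dots, T_n)\) by a matrix \(C(x) \in SO(n+2)\) via \(v_j = \sum_k C_{kj}(x) E_k(x)\). The structure equations recorded at the end of Section \ref{sec:remainingnullity} read \(\nabla_{\gamma'} v_j = \sum_\ell A_{\ell j}(x) v_\ell\), where \(A(x)\) is the skew-symmetric matrix whose only nonzero entries are \(\pm \beta(\gamma(x))\) and \(\pm a_i(\gamma(x)) = \pm f_i(x)\). Equating coefficients in the parallel basis gives the linear ODE \(C'(x) = C(x) A(x)\). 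Bounding \(A\) is the heart of the matter: the \(f_i\) are bounded near \(x_*\) by hypothesis, and the identity \(\nabla_{\gamma'}\gamma' = \beta e_1\) along \(\gamma\) shows that \(|\beta(\gamma(x))|\) is the a.e.-defined unsigned geodesic curvature of \(\gamma\), which is essentially bounded because \(\gamma\) is \(C^{1,1}\); since \(\beta\circ\gamma\) is also continuous on the open set where it is defined, it is locally bounded near \(x_*\).

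Because \(C(x)\) takes values in the compact group \(SO(n+2)\) and \(A\) is bounded, \(C'(x) = C(x)A(x)\) is bounded, so \(C\) is Lipschitz on \((x_* - \varepsilon, x_*)\) and extends continuously to \(x_*\) as an element of \(SO(n+2)\). Combined with the limits of the \(E_k\), each \(T_i(\gamma(x)) = \sum_k C_{k,\, i+2}(x) E_k(x)\) has a one-sided limit as \(x \to x_*\). The only delicate step is the boundedness of \(\beta \circ \gamma\): the function \(\beta\) is smooth on \(V\) but a priori need not extend continuously to \(p_*\), so one must pass through the \(C^{1,1}\) regularity of \(\gamma\) rather than through any property of \(\beta\) on \(M\) itself.
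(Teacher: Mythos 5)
Your argument is correct, and it is a (slightly heavier) variant of the paper's proof: in both cases the engine is that boundedness of the \(f_i\) near \(p_*\) makes the frame Lipschitz along \(\gamma\), hence uniformly continuous, hence convergent at \(x_*\). The paper implements this more directly: it bounds the coordinate derivatives of the \(T_i\) in a chart around \(p_*\) (only the relations \(\nabla_{e_2}T_i = -a_iT_{i-1}+a_{i+1}T_{i+1}\) and \(\nabla_{e_2}T_1=-a_1e_1+a_2T_2\) are needed, so only the \(f_i\) enter), observes the \(T_i\) are unit vectors so subsequential limits exist, and uses the mean value inequality to show all such limits agree; no control of \(\beta\) is required. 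You instead close the system by adjoining \(e_1,e_2\), write the full Frenet frame against a parallel orthonormal frame as \(C(x)\in O(n+2)\) with \(C'=CA\), and must therefore also bound \(\beta\circ\gamma\); your justification of that bound via \(\nabla_{\gamma'}\gamma'=\beta e_1\) and the \(C^{1,1}\) regularity of \(\gamma\) (Lipschitz \(\gamma'\) on a compact subinterval, plus continuity of \(\beta\circ\gamma\) on the open part) is sound, since \(\beta(\gamma(x))\) is exactly the geodesic curvature \(\kgam(x)\) there. What your route buys is a cleaner global statement\textemdash the whole orthonormal frame, including \(e_1\) and \(e_2\), converges as an element of \(O(n+2)\)\textemdash at the cost of the extra curvature-bound step; the paper's route is more economical because it never needs \(\beta\). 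One cosmetic point: \(C(x)\) need only be claimed to lie in \(O(n+2)\) (orientation is irrelevant, and boundedness of \(C\) is all the ODE estimate uses).
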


\begin{proof}
	As all \(f_i\) are bounded near \(p_*\) it follows that the derivatives of all \(T_i\) are bounded near \(p_*\), so say they are bounded in absolute value by a constant \(M\).
	As the \(T_i\) are unit vectors they live in a compact subset so they admit convergent subsequences, hence it suffices to show that the limit is unique. 
	
	Suppose we have two sequences of real numbers \(x_n\) and \(y_n\) such that both converge to \(x_*\) and \((T_i)_{x_n} \to X\), \((T_i)_{y_n} \to Y\).
	Taking a coordinate patch of \( M \) centered at \( p_* \) allows us to treat \( T_i \) as taking values in \( \R^n \) and therefore we may apply the mean value inequality
	\begin{equation*}
		\abs{f(b)-f(a)} \leq (b-a) \sup\abs{f'}
	\end{equation*}
	to the functions \(f = T_i\) with \(a = x_n\) and \(b = y_n\).
	As we take \(n \to \infty\), the right side converges to zero since \(b-a \to 0\) while \(\sup\abs{f'}\) is some finite value, so the left side must also converge to zero so \(X = Y\). 
\end{proof}

\begin{rem}
	As the modifications to the product metric smoothly go to zero as we approach \(M \setminus S_M\) we may have different modifications on each connected component of \(S_M\). 
	In particular, we may choose different orthonormal frames \(\partial_{\vv{1}}, \dots, \partial_{\vv{n}}\)
	which do not extend continuously. 
	For example, if two components are adjacent, the frames have a limit at the common boundary by Lemma \ref{lem:onesidedlimits}, but these limits may differ by an element of \( O(n) \).
Similarly, we may, under certain conditions, change the basis within a component of \( S_M \) if one \( f_i \), but not necessarily all, vanish.
\end{rem}


\section{Fundamental Group}\label{sec:fungroup}
We now investigate topological properties of our conullity two manifolds. 
This is a partial generalization of Brooks' results \cite{Brooks:2021}, focusing on locally irreducible manifolds.
Let \( M \) be a complete, locally irreducible \( (n+2) \)-manifold with conullity 2 and \( \scal < 0 \) such that the scalar curvature is constant along nullity geodesics.
Its universal cover \( \tilde{M} \) is diffeomorphic to \( \R^n \) since \( \sec \leq 0 \) and we denote by \( G \) the deck group of the universal cover acting properly discontinuously and isometrically on \( \tilde{M} \).
Recall that \( \tilde{M}_F \subset \tilde{M} \) is the subset on which all \( a_i \neq 0 \).

\begin{lem}\label{lem:Adef}
For \(p, q \in \tilde{M}\), let
\begin{equation*}
A(p,q) = \int \abs{a_1(\gamma(t)) \expval{\gamma', e_2}} \dd{t},
\end{equation*}
integrating over the unique geodesic segment \(\gamma\) from \(p\) to \(q\). 
Then \(A(p,q)\) is an isometry invariant depending only upon the leaves \(\mathcal{F}_p\) and \( \mathcal{F}_q\).
Furthermore, if \( M \) is locally irreducible then \( p, q \in \tilde{M} \) are in the same leaf if and only if \( A(p,q) = 0 \).
\end{lem}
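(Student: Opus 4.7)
The plan is to recognize $A(p,q)$ as the length of the projection of the geodesic $\gamma$ onto the one-dimensional leaf space, with respect to a natural weighted metric $|f_1(x)|\dd{x}$, and then to use the Cartan--Hadamard structure of $\tilde{M}$ together with total geodesy of the leaves to show that this projection is strictly monotonic whenever $\gamma$ is not contained in a single leaf.

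First I would pass to the universal cover and observe that, since $\tilde{M}$ is simply connected and $\bar{M}_C = M$ by local irreducibility, Proposition \ref{prop:curvehitsonce} yields a globally defined Lipschitz leaf parameter $x \colon \tilde{M} \to \R$ whose level sets are the leaves of $\leaff$. Working in the coordinates $(x,\uu,\vv{1},\dots,\vv{n})$ of Theorem \ref{thm:themetriconaV} on a connected component of $\tilde{M}_F$, the expression $\pdv{\phi}{x} = \bfun\, e_2 + (\text{section of }D)$ derived in the proof of that theorem gives $\expval{\gamma',e_2} = \bfun(x,\uu)\,x'(t)$ for any smooth curve $\gamma$, and combined with $a_1 = f_1(x)/\bfun(x,\uu)$ from Theorem \ref{lem:bfunmetricprops}\ref{item:generalaiformula} this yields $a_1 \expval{\gamma',e_2} = f_1(x(t))\,x'(t)$. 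Since the integrand in the definition of $A$ extends continuously by zero across $\tilde{M} \setminus \tilde{M}_C$ and $\tilde{M}_F$ is dense in $\tilde{M}$, we obtain
\begin{equation*}
    A(p,q) = \int_0^L |f_1(x(t))|\,|x'(t)|\,\dd{t},
\end{equation*}
which is the length of the projected curve $t \mapsto x(\gamma(t))$ in $\R$ with respect to the weighted metric $|f_1(s)|\,\dd{s}$.

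The key geometric step is the following dichotomy for the unique geodesic $\gamma \colon [0,L] \to \tilde{M}$ from $p$ to $q$ (unique since $\sec\leq 0$ makes $\tilde{M}$ a Hadamard manifold): \emph{either $\gamma$ is contained in a single leaf of $\leaff$, or $x \circ \gamma$ is strictly monotonic on $[0,L]$}. To see this, suppose $(x\circ\gamma)'(t_0) = 0$ at some $t_0$. Then $\gamma'(t_0) \in \ker(\dd{x})$, i.e.\ $\gamma'(t_0)$ is tangent to $\leaff_{\gamma(t_0)}$. Since the leaves are complete and totally geodesic (Section \ref{sec:prelim}), the geodesic starting at $\gamma(t_0)$ with initial velocity $\gamma'(t_0)$ remains in that leaf; by uniqueness of geodesics it is $\gamma$ itself, so $\gamma$ stays in the leaf. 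I expect this dichotomy to be the main conceptual step, but phrased in terms of total geodesy rather than via a direct convexity computation on $\tilde{M}$ the argument is short.

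Assembling: in the leaf case $\gamma'\perp e_2$ identically and so $A(p,q) = 0$; in the monotonic case, the substitution $s = x(t)$ yields
\begin{equation*}
    A(p,q) = |G(x(q)) - G(x(p))|, \qquad G(s) = \int_0^s |f_1(r)|\,\dd{r}.
\end{equation*}
This depends only on $x(p)$ and $x(q)$, and therefore only on the leaves $\leaff_p$ and $\leaff_q$; isometry invariance is immediate since $a_1$, $e_2$, and $\gamma$ are all Riemannian invariants. For the final equivalence, one direction is the leaf case above, while for the converse, $A(p,q) = 0$ with $p,q$ in distinct leaves would force $G(x(p)) = G(x(q))$ with $x(p) \neq x(q)$; but Theorem \ref{lem:bfunmetricprops}\ref{item:bfunlocirred} and local irreducibility imply $f_1^{-1}(0)$ contains no open interval, so $G$ is strictly increasing --- a contradiction.
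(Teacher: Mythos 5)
Your proposal is correct, and its computational heart is the same as the paper's: using Theorem \ref{lem:bfunmetricprops}\ref{item:generalaiformula} and the coordinates of Theorem \ref{thm:themetriconaV} one gets \(\abs{a_1\expval{\gamma',e_2}}=\abs{f_1(x(t))}\abs{x'(t)}\), so the integral reduces to a difference of an antiderivative of \(\abs{f_1}\) at the endpoint leaf parameters, and the final equivalence uses that \(f_1^{-1}(0)\) has empty interior by local irreducibility. Where you differ is the organization. The paper proves leaf-dependence by comparing two geodesics joining the same pair of leaves, splitting each into subintervals lying in \(M_F\), and then, for the positivity direction, switches from the geodesic to an integral curve of \(e_2\), invoking Proposition \ref{prop:curvehitsonce} and the remark that the computation only requires a path that does not backtrack through leaves. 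You instead prove one closed formula \(A(p,q)=\abs{G(x(q))-G(x(p))}\) directly for the geodesic by first establishing the dichotomy ``contained in a leaf or strictly monotone across leaves.'' That dichotomy is exactly the monotonicity the paper uses only implicitly (``with our path increasing in the \(x\) coordinate''), so your argument is more self-contained on this point and avoids the detour through \(e_2\)-integral curves; what the paper's route buys is that it never needs a globally defined leaf parameter or any statement about its regularity.

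Two spots deserve tightening. First, the step \((x\circ\gamma)'(t_0)=0\Rightarrow\gamma'(t_0)\in\ker(\dd{x})\) presumes that the leaf parameter is differentiable with nonvanishing differential at \(\gamma(t_0)\); outside \(M_C\) the foliation, hence \(x\), is only Lipschitz. A cleaner route to the same dichotomy: if \(\gamma\) met one leaf at two distinct times, then, the leaf being a complete totally geodesic hypersurface in the Hadamard manifold \(\tilde{M}\) and hence convex, the unique geodesic between those two points --- the intermediate segment of \(\gamma\) --- would lie in the leaf, and total geodesy plus uniqueness of geodesics then traps all of \(\gamma\) in it; otherwise \(x\circ\gamma\) is injective, hence strictly monotone. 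Second, the identity \(\abs{a_1\expval{\gamma',e_2}}=\abs{f_1(x)x'}\) is literally established only on \(M_F\), while the geodesic may spend time in \(M_C\setminus M_F\) where the integrand need not vanish; density of \(M_F\) in \(\tilde{M}\) alone does not give density of \(\gamma^{-1}(M_F)\) in the parameter interval. Granted the monotonicity just proved and the fact that \(M\setminus M_F\) is a union of leaves whose parameter set has empty interior, \(\gamma^{-1}(M_F)\) is dense in \(\gamma^{-1}(M_C)\) and the identity extends there by continuity, while on \(M\setminus M_C\) both sides vanish. The paper glosses over the same point by restricting to subintervals in \(M_F\), so this is a matter of completeness rather than a defect of your approach.
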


\begin{proof}
Since \(e_2\) is well-defined whenever \(a_1 \neq 0\) and \(\abs{\expval{\gamma', e_2}} \leq \| \gamma' \|\) this integral is well-defined.
Furthermore, \(A(p,q) = A(gp, gq)\) for any \(g \in G\) since \( \abs{a_1} \) is an isometry invariant.

To see that it only depends on the leaves, let \(p, \, p_2 \in \mathcal{F}_p\),  \(q, \, q_2 \in \mathcal{F}_q\), \( \gamma \) be the geodesic between \( p \) and \( q \), and \( \gamma_2 \) be the geodesic between \( p_2 \) and \( q_2 \).
The leaves through \(\gamma\) are the leaves between \(\mathcal{F}_p\) and \(\mathcal{F}_q\), which is precisely the same set of leaves as those through \(\gamma_2\). 
We split up the integrals along \(\gamma\) and \(\gamma_2\) into subintervals where all \(a_i \neq 0\) and these two collections of subintervals are in bijection with one another. 
With this in mind, it suffices to verify that the assertion is true along an interval where \(a_i \neq 0\).
We show this in the case of the interval \( [0,1] \) with our path increasing in the \( x \) coordinate.

On each connected component of \( \tilde{M}_F \) we have coordinates \((x, \uu, \vv{1}, \dots, \vv{n})\) as before where the metric has the form \eqref{eq:bmetric}.
 By Theorem \ref{lem:bfunmetricprops}, \(a_1 = \frac{f_1(x)}{\bfun(x,u)}\) and using the expression for \( e_2 \) we get
\begin{equation*}
\gamma'(t) = \dd{x}(\gamma') \pdv{x} + \dd{\uu}(\gamma') \pdv{\uu} + \sum_{i=1}^n \dd{\vv{i}}(\gamma') \pdv{\vv{i}}
\end{equation*}
where \(\dd{x}(\gamma') \geq 0\) everywhere.
Only the \(x\)-component of this does not cancel out when we compute \(\expval{\gamma', e_2}\), and
\begin{align*}
\int_0^1 \abs{a_1 \expval{\gamma'(t), e_2}} \dd{t} &= \int_0^1 \abs{\frac{f_1(x)}{\bfun(x,\uu)} \frac{\dd{x}(\gamma')}{\bfun(x,\uu)} \bfun(x,\uu)^2} \dd{t} \\
&= \int_0^1 \abs{f_1(x(\gamma(t)))} \dd{x}(\gamma') \dd{t} \\
&= F_1(x(\gamma(1))) - F_1(x(\gamma(0)))
\end{align*}
where \(F_1\) is an antiderivative of \(\abs{f_1}\). 
Hence the value of the integral only depends on the \(x\)-coordinates, and therefore the leaves, of the endpoints. 

The above argument did not require \( \gamma \) to be a geodesic, only that it was monotone in the \( x \) coordinate and so did not backtrack into previous leaves. 
Then by Proposition \ref{prop:curvehitsonce}, integral curves of \( e_2 \) satisfy this as well. 
If \( p, q \) are in the same leaf then their minimal geodesic is contained in the same leaf and hence the integral is zero. 
If not, then let \( \gamma \) be an integral curve of \( e_2 \) through \( p \) which must intersect the leaf \( \leaff_q \) at some other point. 
Then the computation of \( A(p,q) \) amounts to integrating \( \abs{f_1(x)} \), which is positive almost everywhere, over an interval of nonzero measure, and hence this integral is nonzero.
\end{proof}

\begin{rem}
	Since the argument didn't require \( \gamma \) to be a geodesic, only that it did not backtrack through leaves, \(A(p,q)\) may be computed by any path between \(p\) and \(q\) with this property.
\end{rem}

Now we examine how \( G \) acts on the leaves \(\mathcal{F}_p\).
As \( \sec \leq 0 \), \(G\) cannot have torsion.
We will show that no nontrivial \(g \in G\) can fix a leaf, so \(g(\mathcal{F}_p) \subset \mathcal{F}_p\) implies \( g = e \). 
One tool we will use is knowledge about crystallographic groups. 
A crystallographic group on \(\mb{R}^n\) is a discrete uniform subgroup of the Euclidean group \(E(n)\), which is equivalent to the assumption that the group acts properly discontinuously with compact quotient on \(\R^n\). 
Moreover, crystallographic groups have a finite index normal subgroup which acts by translations, see \cite{Wolf:2010}.
We now use this to prove that non-identity elements cannot fix two leaves:

\begin{lem} \label{lem:fix2leaf}
If \(g \in G\) fixes two distinct leaves \(\leaf{0}\), \(\leaf{1} \in \mathcal{F}\) then \(g = e\).
\end{lem}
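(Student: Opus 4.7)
The plan is to suppose $g \in G \setminus \{e\}$ fixes both $\leaf{0}$ and $\leaf{1}$ and derive a contradiction. First I will promote the hypothesis to the stronger statement that $g$ fixes every leaf of $\mathcal{F}$ between $\leaf{0}$ and $\leaf{1}$. The induced map $\sigma$ on the one-dimensional leaf space is a homeomorphism fixing two distinct points $x_0 \ne x_1$, and by connectedness it preserves the interval $[x_0, x_1]$ setwise. From the proof of Lemma~\ref{lem:Adef} one has $A(\leaf{x_0}, \leaf{y}) = \int_{x_0}^{y} \abs{f_1(t)}\,dt$, so isometry invariance of $A$ gives $\int_{x_0}^{\sigma(y)} \abs{f_1(t)}\,dt = \int_{x_0}^{y} \abs{f_1(t)}\,dt$ for every $y \in [x_0,x_1]$. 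Local irreducibility together with Theorem~\ref{lem:bfunmetricprops}\ref{item:bfunlocirred} prevents $f_1$ from vanishing on any open interval, so the cumulative function is strictly monotone and $\sigma = \mathrm{id}$ on $[x_0, x_1]$.

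Next I will use the rigidity of the canonical frame $\{e_1, e_2, T_1, \ldots, T_n\}$ from Sections~\ref{sec:prelim}--\ref{sec:remainingnullity}. Because $g$ preserves curvature and the nullity distribution, $g_*$ sends each frame vector to $\pm$ itself. The sign on $e_2$ is $+$ because $g$ carries orthogonal curves of $\mathcal{F}$ to orthogonal curves while still traveling from $\leaf{0}$ to $\leaf{1}$ in the same direction; applying $g_*$ to $\nabla_{e_2} e_1 = a_1 T_1$ and $\nabla_{e_2} T_i = -a_i T_{i-1} + a_{i+1} T_{i+1}$, together with preservation of the positive functions $a_i$, inductively pins every remaining sign to $+$. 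In the coordinates of Theorem~\ref{thm:themetriconaV} on a connected component $V \subset \tilde{M}_F$ containing a neighborhood of an orthogonal curve between the two leaves, Theorem~\ref{lem:bfunmetricprops}\ref{item:a1def} gives $e_1 = \partial_u$ and $T_i = \partial_{v_i}$, so combined with $g$ preserving every leaf (hence the $x$-coordinate), the map $g$ must act on $V$ as $(x, u, v_i) \mapsto (x,\, u + c_0(x),\, v_i + c_i(x))$ for smooth functions $c_j$.

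Finally I will impose $g_* e_2 = e_2$ using the formula for $e_2$ in Theorem~\ref{lem:bfunmetricprops}\ref{item:orthonormal}. The $\partial_x$-coefficient requires $\bfun(x, u + c_0(x)) = \bfun(x, u)$ for all $u$; since $\scal < 0$ combined with Theorem~\ref{lem:bfunmetricprops}\ref{item:isametric} forces $\bfun_{uu}/\bfun > 0$, $\bfun$ is strictly convex in $u$ and in particular non-periodic, so $c_0 \equiv 0$. The $\partial_{v_i}$-coefficients produce the first-order linear system $c_i'(x) = c_{i+1}(x) f_{i+1}(x) - c_{i-1}(x) f_i(x)$. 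With $c_{-1} \equiv c_0 \equiv 0$, the $i = 0$ equation reads $c_1 f_1 \equiv 0$, whence $c_1 \equiv 0$ by local irreducibility; induction on $i$ yields $c_i \equiv 0$ for every $i$. Thus $g$ is the identity on $V$, and by rigidity of isometries of a connected Riemannian manifold, $g = e$ globally. The main obstacle will be the first paragraph, where I must extract from Lemma~\ref{lem:Adef} the precise measure-preservation property that rules out any nontrivial motion on the leaf parameter; once that is in hand, the frame/ODE argument is a clean rigidity calculation, and the local irreducibility hypothesis is used in exactly the two places where nonvanishing of $f_1$ (and then of each $f_i$) is required.
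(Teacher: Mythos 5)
Your route is genuinely different from the paper's, and most of it is sound. The paper proves Lemma \ref{lem:fix2leaf} first and purely geometrically: it passes to the quotients \(\leaf{i}/\langle g\rangle\), applies the Soul theorem to produce axes \(\gamma_0,\gamma_1\) in the two leaves on which \(g\) translates, shows these are parallel, invokes the flat strip theorem, and then kills the resulting flat totally geodesic strip by an inductive splitting-tensor computation (\(-a_1\alpha_1=0\), etc.); only afterwards, in Lemma \ref{lem:fix1leaf}, does it use the \(A\)-invariance of Lemma \ref{lem:Adef} to upgrade ``fixes one leaf'' to ``fixes nearby leaves.'' You run that \(A\)-argument first (legitimately: with two fixed leaves the induced homeomorphism of the leaf line is orientation-preserving, so strict monotonicity of \(\int\abs{a_1}\) forces it to be the identity), and then replace the Soul/flat-strip core by a rigidity computation in the coordinates of Theorem \ref{thm:themetriconaV} on a component \(V\) of \(\tilde{M}_F\). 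There is no circularity (Theorem \ref{thm:themetriconaV} and Lemma \ref{lem:Adef} precede this lemma), and your final ODE step is correct and even simpler than you suggest: on \(V\) every \(f_i\) is nonvanishing by definition of \(M_F\), so \(c_1\equiv 0\) and the induction is immediate. One small wording fix: \(V\) need not contain ``a neighborhood of an orthogonal curve between the two leaves''; it suffices (and is all you can guarantee) to take any component of \(\tilde{M}_F\), whose leaves are all fixed by \(g\) since your first step actually gives \(\sigma=\mathrm{id}\) on the whole leaf space.

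The genuine gap is the sign-pinning claim in your second paragraph. Applying \(g_*\) to \(\nabla_{e_2}e_1=a_1T_1-\beta e_2\) and to the Frenet equations, with \(g_*e_2=e_2\) and the \(a_i\) invariant, only shows that the signs \(\varepsilon_1,\varepsilon_{T_1},\dots,\varepsilon_{T_n}\) are all \emph{equal}; it does not force them to be \(+1\). Indeed the all-minus case corresponds to a genuine isometry type of \eqref{eq:bmetric}: \((x,\uu,\vv{i})\mapsto(x,-\uu,-\vv{i})\) is an isometry whenever \(\bfun(x,\cdot)\) is even in \(\uu\) (e.g.\ \(\kgam\equiv 0\)), so no algebra with the frame alone can rule it out, and your subsequent functional equations would not yield a contradiction in that case either (\(\bfun(x,-\uu+c_0)=\bfun(x,\uu)\) is satisfiable). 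The repair is easy but must be said: in the reflection case your coordinate description gives \(g(x,\uu,\vv{i})=(x,-\uu+c_0(x),-\vv{i}+c_i(x))\), which fixes the point \((x,c_0(x)/2,c_i(x)/2)\in V\), impossible for a nontrivial deck transformation; alternatively, \(g^2\) is of translation type, your argument then gives \(g^2=e\), contradicting torsion-freeness of \(G\) (as \(\sec\leq 0\)). With that one additional observation your proof closes, and it trades the paper's Soul-theorem/flat-strip machinery for an explicit coordinate rigidity argument at the cost of leaning on Theorem \ref{thm:themetriconaV} and Proposition \ref{prop:curvehitsonce}.
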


\begin{proof}
Suppose otherwise.
Then \(g(\leaf{0}) \subset \leaf{0}\) and \(g(\leaf{1}) \subset \leaf{1}\), and the restrictions \(g_{\leaf{i}}\) are acting by isometries of \(\R^n\).
Consider the coverings \(\pi_i \colon \leaf{i} \to \leaf{i}/\langle g \rangle\), where \( \langle g \rangle \) is the subgroup of \( G \) generated by \( g \).
As the leaves are complete flat totally geodesic hyperplanes, the orbit spaces \(\leaf{i}/\langle g \rangle\) are complete flat totally geodesic submanifolds of \(\tilde{M}/\langle g \rangle\).
Then we may apply the Soul theorem to the orbit spaces, giving us closed totally convex, totally geodesic embedded submanifolds \(S_i \subset \leaf{i}/\langle g \rangle\). 
We may then consider the preimages of the souls under the projection map: \(N_i \coloneqq \pi_i^{-1}(S_i)\), which are totally convex totally geodesic submanifolds contained in the leaves \(\leaf{i}\).
Note that these \(N_i\) are connected as they are totally convex.
Furthermore, since \( \langle g \rangle \simeq \Z \), each \( S_i \) is a closed geodesic and each \( N_i \) is a geodesic isometric to \( \R \) so we will also call them \( \gamma_0 \) and \( \gamma_1 \).

Now \(\langle g \rangle\) acts properly discontinuously and with compact quotient on each \(\gamma_i\). 
Hence it acts by translations on these two geodesics. 

Since \(\sec \leq 0\), \(\tilde{M}\) is convex.
Also, \(\gamma_1\) is a closed convex subset, so \(d(\cdot, \gamma_1)\) is a convex function on \(\tilde{M}\). 
Now pick \(p_0 \in \gamma_0\). 
Then \(g\) acts on \(p_0\) by translation along \(\gamma_0\). 
As \(g\) translates along \(\gamma_0\), \(g\) fixes the image of \(\gamma_0\).
Furthermore, \(d(\cdot, \gamma_1)\) is constant on \(\gamma_0\) since \(d(p_0, \gamma_1) = d(g^i(p_0), \gamma_1)\) for any \(i\) since \(g\) preserves \(\gamma_1\), acts by isometries, and \( d \) is convex.

Let \(p_1 \in \gamma_1\) be the closest point to \(p_0\).
Then \(p_1\) is translated along \( \gamma_1 \) by \(g\). 
Then for any \(i\), \(d(g^i(p_0), g^i(p_1)) = d(p_0, p_1)\) and hence \(\gamma_0\) and \(\gamma_1\) are parallel geodesics. 
By results from section 2 of \cite{Ballman:1985}, the geodesics \( \gamma_0 \) and \( \gamma_1 \) bound a flat totally geodesic strip \(V\). 
Since \(V\) is flat, at every point \(q \in V\) \(T_qV\) contains a (nonzero) nullity vector \(T\) which must be some combination \(\alpha_1 T_1 + \dots + \alpha_n T_n\). 
Furthermore the tangent space \(T_qV\) must contain a vector \(v = b_1 e_1 + e_2 + c_1 T_1 + \dots + c_n T_n\) with a nonzero \(e_2\) component since \(\gamma_0\) and \(\gamma_1\) are in different leaves. 
Then we may consider \(\nabla_v T \in T_qV\). 
Recall that \(\nabla_{e_1} T_i = 0\) and \(\nabla_{T_i} T_j = 0\) for any \(i, j\). 
Therefore we get
\begin{align*}
\nabla_v T &= (\alpha_1 \nabla_{e_2} T_1 + \dots + \alpha_n \nabla_{e_2} T_n) + (v(\alpha_1) T_1 + \dots + v(\alpha_n) T_n)\\
&= -a_1 \alpha_1 e_1 + (-a_2 \alpha_2 + v(\alpha_1)) T_1 \\
&\quad + \sum_{i=2}^{n-1} (\alpha_{i-1} a_i - \alpha_{i+1}a_{i+1} + v(\alpha_i)) T_i + (\alpha_{n-1}a_n + v(\alpha_n))T_n.
\end{align*}
Since \(M\) is locally irreducible \( \tilde{M}_F \subset \tilde{M} \) is an open dense set with all \(a_i \neq 0\).
Suppose \(q \in \tilde{M}_F \cap V\), which we may do since \( V \) spans an interval of leaves, a dense set of which lie in \( \tilde{M}_F \).
We will prove inductively that each \( \alpha_i = 0 \) and \( v(\alpha_i) = 0 \), in which case \( T = 0 \) which contradicts our assumption.
Since \( V \) is totally geodesic, \(\nabla_v T \in \text{span}\{v, T\}\). 
However, \(v\) has a nonzero \(e_2\) component while \(T\) and \(\nabla_v T\) do not, so \(\nabla_v T \in \text{span}\{T\}\). 
Then the \(e_1\) component of \(\nabla_v T\) is zero, so \(-a_1 \alpha_1 = 0\). 
Since all \(a_i \neq 0\), we must have \(\alpha_1 = 0\).
This is true at every point in \( \tilde{M}_F \), in particular in a neighborhood around \( q \), so \(v(\alpha_1) = 0\) as well.
This proves the base case.

For the induction step, if we suppose that \( \alpha_1, \, \dots, \, \alpha_i\), \(v(\alpha_1), \, \dots, \, v(\alpha_i) = 0 \) then the \( T_i \) component of \( T \) is zero and hence the \( T_i \) component of \( \nabla_v T \) must also be zero.
We therefore have that \( -\alpha_{i+1} a_{i+1} = 0 \) and so \( \alpha_{i+1} = 0 \), and since this is true in an open set we also have that \( v(\alpha_{i+1}) = 0 \), completing the induction argument. 
\end{proof}

\begin{lem}\label{lem:fix1leaf}
If \(g \in G\) fixes a leaf \(\leaf{0}\) then \(g = e\).
\end{lem}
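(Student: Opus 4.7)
The plan is to reduce to Lemma~\ref{lem:fix2leaf} by analyzing the induced action of $g$ on the leaf space of $\mathcal{F}$. Specifically, I will show that $g$ either fixes every leaf (in which case Lemma~\ref{lem:fix2leaf} immediately gives $g = e$) or acts as an involutive reflection on the leaf space, in which case $g^2$ fixes every leaf, hence $g^2 = e$, and torsion-freeness of $G$ then yields $g = e$.

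To set up the parameterization, pick any $p \in \leaf{0}$ and let $\gamma \colon (c_1, c_2) \to \tilde M$ be the maximal $C^{1,1}$ integral curve of $e_2$ through $p$. Local irreducibility of $M$ implies that the closure of $\tilde M_C$ is all of $\tilde M$, so Proposition~\ref{prop:curvehitsonce} shows that $\gamma$ meets every leaf of $\mathcal{F}$ exactly once; writing $L_t$ for the leaf through $\gamma(t)$ produces a bijection $t \mapsto L_t$ between $(c_1, c_2)$ and the leaf space with $L_0 = \leaf{0}$. Since $\mathcal{F}$ is canonically defined from the splitting tensor, the isometry $g$ permutes leaves and induces a continuous bijection $\bar g \colon (c_1, c_2) \to (c_1, c_2)$ fixing $0$, which must therefore be strictly monotone.

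The key tool is the isometry invariant $A$ from Lemma~\ref{lem:Adef}: since $A$ depends only on the leaves of its arguments and is preserved by $g$, we have $A(L_0, L_t) = A(L_0, L_{\bar g(t)})$. Computing $A$ along $\gamma$ (using $\gamma' = e_2$ and $a_1 \circ \gamma = f_1$) gives $A(L_0, L_t) = \bigl|\int_0^t |f_1(s)|\,\mathrm{d}s\bigr|$. By Theorem~\ref{lem:bfunmetricprops}\ref{item:bfunlocirred}, local irreducibility forces $f_1^{-1}(0)$ to have empty interior, so $t \mapsto A(L_0, L_t)$ is strictly monotone on each of $(c_1, 0]$ and $[0, c_2)$. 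For $t > 0$, $\bar g(t)$ must therefore equal either $t$ itself or the unique $\rho(t) \in (c_1, 0)$ at the same $A$-distance from $L_0$, and combining this with strict monotonicity of $\bar g$ leaves exactly two global possibilities: either $\bar g = \mathrm{id}$, or $\bar g$ is the involution sending $t > 0$ to $\rho(t)$ and $t < 0$ to $\rho^{-1}(t)$.

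In the first case $g$ fixes every leaf $L_t$, so picking any $t \neq 0$ and applying Lemma~\ref{lem:fix2leaf} to the two fixed leaves $L_0$ and $L_t$ gives $g = e$. In the second case $\bar g^2 = \mathrm{id}$, so $g^2$ fixes every leaf, and the same lemma applied to $g^2$ gives $g^2 = e$. To conclude I invoke torsion-freeness of $G$: since $M$ is complete with $\sec \leq 0$, the universal cover $\tilde M$ is a Hadamard manifold on which $G$ acts freely by isometries, and by the Cartan fixed-point theorem any finite-order isometry of a Hadamard manifold has a fixed point, so $G$ is torsion-free and $g^2 = e$ forces $g = e$. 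The main obstacle is the dichotomy argument: one must carefully combine $g$-invariance of $A$ with strict monotonicity of both $\bar g$ and $A(L_0, \cdot)$ to rule out any more exotic continuous bijection of the leaf space, and for this the local irreducibility hypothesis, which ensures $f_1$ vanishes on no open set, is essential.
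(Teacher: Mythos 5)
Your proof is correct, and its core is the same as the paper's: parametrize the leaf space by an integral curve of \(e_2\) through \(\leaf{0}\), use the invariant \(A\) of Lemma~\ref{lem:Adef} together with local irreducibility (so \(a_1\circ\gamma\) is nonzero on a dense set and \(t\mapsto A(\leaf{0},L_t)\) is strictly monotone on each side of \(0\)) to pin down the induced action on leaves, and then finish with Lemma~\ref{lem:fix2leaf}. The genuine difference is that you explicitly confront the possibility that \(g\) acts on the leaf space as a reflection across \(\leaf{0}\): since \(A\) is unsigned, \(A(\leaf{0},\tilde{\leaff})=A(\leaf{0},g(\tilde{\leaff}))\) alone does not force \(g(\tilde{\leaff})=\tilde{\leaff}\) when the two leaves could lie on opposite sides of \(\leaf{0}\). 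The paper's proof passes over this case (its ``without loss of generality \(s<t\), hence \(s=t\)'' step tacitly assumes both intersection points are on the same side of \(\leaf{0}\)), whereas you dispose of it cleanly: in the reflection case \(g^2\) fixes every leaf, so \(g^2=e\) by Lemma~\ref{lem:fix2leaf}, and torsion-freeness of \(G\) (Cartan fixed point theorem on the Hadamard cover, as the paper itself notes before Lemma~\ref{lem:fix2leaf}) gives \(g=e\). So your argument buys a complete case analysis at the cost of invoking monotonicity/continuity of the induced map on the leaf space plus torsion-freeness; both ingredients are available in the paper, and the extra care is a worthwhile improvement rather than a detour.
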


\begin{proof}
We will show that \(g\) must also fix a nearby leaf and then use Lemma \ref{lem:fix2leaf}.
Let \( p_0 \in \leaf{0} \) and let \( \gamma \) be an integral curve of \( e_2 \) starting at \( p_0 \).
Then \( A(\leaf{0}, \leaff) \) is the integral of \( \abs{a_1} \) along \( \gamma \) from \( p_0 \) to the leaf \( \leaff \).
Note that \( \gamma \) intersects every leaf by Proposition \ref{prop:curvehitsonce}.
Let \( \tilde{\leaff} \) be a nearby leaf. 
Suppose that \( \tilde{\leaff} \) and \( g(\tilde{\leaff}) \) intersect \( \gamma \) at \( \gamma(s) \) and \( \gamma(t) \) respectively. 
Without loss of generality suppose \( s < t \). 
Then \( A(\leaf{0}, \tilde{\leaff}) =  A(\leaf{0}, g(\tilde{\leaff}))\), so the integrals are the same. 
This implies that \( s = t \) since \( \text{Im}(\gamma) \cap M_F \) is dense in \( \text{Im}{\gamma} \) so the integral over any nonempty open interval is nonzero.
Hence \( \tilde{\leaff} \) is fixed by \( g \).
\end{proof}

We are now ready to prove Theorem \ref{thm:fundgroup}. 
\begin{proof}[Proof of Theorem \ref{thm:fundgroup}]
Pick a leaf \(\leaf{0}\) of the foliation \( \mathcal{F} \) and define a real-valued function on the set of leaves
\begin{equation*}
B(\leaff_p) \coloneqq \pm A(\leaf{0}, \leaff_p)
\end{equation*}
where we assign a positive value on one side of \(\leaf{0}\) and a negative value on the other.
By an argument similar to Lemma \ref{lem:Adef}, \( B \) is an injection from the set of leaves to \( \R \).
Similarly we may define
\begin{equation*}
A^*(p,q) \coloneqq \pm A(p,q)
\end{equation*}
where we assign a positive or negative sign depending on the relative positions of \(\leaff_p\) and \(\leaff_q\) and such that it agrees with the choice relative to \(\leaf{0}\).
That is, \(B(\leaff_p) = A^*(p_0, p)\) where \(p_0 \in \leaf{0}\). 

Now recall that the value of \(A(p,q)\) is path-independent as long as the path does not cross back over previously traversed leaves.
So if we take points \(p_1\), \(q_0\), \(q_1 \in \tilde{M}\) with \(p_1\), \(q_0\), \(q_1\) all on the same side of \(\leaf{0}\), then
\begin{align*}
A^*(p_0, q_0) - A^*(p_0, q_1) &= (A^*(p_0, p_1) + A^*(p_1, q_0)) - (A^*(p_0, p_1) + A^*(p_1, q_1)) \nonumber \\
&= A^*(p_1, q_0) - A^*(p_1, q_1).
\end{align*}
We may remove the assumption that all points are on the same side of \(p_0\) by applying this multiple times. 
It follows that
\begin{align}
B(g(\leaff_p)) - B(g(\leaff_q)) &= A^*(\leaf{0}, g(\leaff_p)) - A^*(\leaf{0}, g(\leaff_q)) \\
&= A^*(g(\leaf{0}), g(\leaff_p)) - A^*(g(\leaf{0}), g(\leaff_q)) \nonumber \\
&= A^*(\leaf{0}, \leaff_p) - A^*(\leaf{0}, \leaff_q) \nonumber\\
&= B(\leaff_p) - B(\leaff_q). \nonumber
\end{align}
We therefore get an induced action of \( G \) on the image of \( B \) given by \( g \cdot B(\leaff_p) := B(g(\leaff_p)) \).
Using this definition, \( G \) acts fixed-point freely since \( g \in G \) cannot fix a leaf without being the identity element, and the action is by isometries in the absolute value metric on \( \R \).
So if \(G \neq \{e\}\) then the image of \(B\) is all of \(\R\) since \( G \) acts on it by nontrivial translations and by the definition of \( B \) its image is connected.

If \(G\) acts properly discontinuously on \(\R\) then it must be trivial or \(\Z\), so suppose it does not.
Such an action on \(\R\) must then have every orbit dense. 
Recall that \(a_1\) is a continuous function, so if it ever vanishes on some leaf then it must vanish on a dense set and then be identically zero.
But this contradicts the assumption that \(M\) is locally irreducible, so without loss of generality \(a_1 > 0\) everywhere.
Since \(a_1\) never vanishes, \(T_1\) is defined everywhere and so we can assume that \(a_2\), which was defined in terms of \(T_1\), is continuous. 
If \(a_2\) ever vanishes then it does so on a dense set and so must vanish everywhere.
We repeat this argument, so without loss of generality all \(a_i > 0\) everywhere. 

We therefore have a smooth foliation with global coordinates \((x, \uu, \vv{i})\) as in Theorem \ref{thm:themetriconaV}.
Construct these coordinates by picking a \(p_0 \in \leaf{0}\) and \(p_0 = (0,0,\dots, 0)\). 
Since the orbit of 0 in the image of \(B\) is dense there exists a sequence \(g_k \in G \setminus \{e\}\) such that \(p_k := g_k(p_0) \in \leaf{k}\) and \(B(\leaf{k}) \to 0\) as \(k \to \infty\). 
Since \(G\) acts properly discontinuously on \(\tilde{M}\), there can only be finitely many \(p_k\) in any compact neighborhood of \(p_0\).
We will now show that the \(\uu\) and \(\vv{i}\) coordinates of the \(p_k\) must be bounded which finishes the proof.

To see this let \( x_k \) be the \( x \)-coordinate of \( p_k \) and let \(q_k = (x_k, 0, \dots, 0)\).
We want \( \beta \neq 0 \) at \( q_k \).
If taking such a subsequence is impossible then \( \beta(q_k) = \beta(x_k, 0, \dots, 0) = 0 \) for all sufficiently large \( k \).
In this case we may pick a different \( p_0 \in \leaf{0} \), say \( p_0 = (0, \varepsilon, 0, \dots, 0) \) for some small \( \varepsilon \) and constructing new coordinates with this new \( p_0 \) as the origin.
This will not change which leaves the \( p_k \) are in but will shift the \( q_k \) such that \( \beta(q_k) \neq 0 \) for all sufficiently large \( k \).

Then as \(k \to \infty\), \(q_k \to p_0\) since \( B(\mathcal{F}_{q_k}) = B(\mathcal{F}_{p_k}) \to 0 \) and hence \( x_k \to 0 \). 
Consider also \(g_k^{-1}(q_k)\), which lies in \(\leaf{0}\) because \(q_k\) and \(p_k\) are connected by a geodesic in the leaf \(\leaf{k}\), so \(g_k^{-1}(q_k)\) is connected to \(p_0\) by a geodesic in the leaf \(\leaf{0}\).
Furthermore, the length of this geodesic is also preserved.
Therefore if we show that the \( \uu \) and \( \vv{i} \) coordinates of \(g_k^{-1}(q_k)\) are bounded then the coordinates for \( p_k \) are bounded as well.

To see that the \(\uu\) coordinates of \(g_k^{-1}(q_k)\) must be bounded, let \(\uu_k\) be the \(\uu\)-coordinate of \(g_k^{-1}(q_k)\) and suppose it is not bounded.
So without loss of generality \(\uu_k \to \infty\) and is monotone increasing.
Recall that \(a_1 = \frac{f_1(x)}{\bfun(x,\uu)}\) and \(a_1\) is preserved by isometries,
\begin{equation*}
a_1(g_k^{-1}(q_k)) = a_1(q_k) \to a_1(p_0) = f_1(0) \quad \text{as } k \to \infty.
\end{equation*}
However, \(a_1(g_k^{-1}(q_k)) = \frac{f_1(0)}{\bfun(0, \uu_k)}\).
So as \(k \to \infty\), and \(\uu_k \to \infty\), we have \(\bfun(0,\uu_k) \to 1 = \bfun(0,0)\). 
But \( \bfun(0,\uu) \) is convex (and non-constant) since \( \bfun_{\uu \uu} = -\frac{\scal}{2}\bfun > 0 \) and hence the \( \uu \)-coordinates of \(g_k^{-1}(q_k)\) must be bounded.

For the \(\vv{i}\) coordinates we compute
\begin{equation*}
T_i(e_2(a_j)) = e_2(T_i(a_j)) + [T_i, e_2](a_j) = -(\nabla_{e_2}T_i)(a_j) = \begin{cases} a_1 a_j \beta & i = 1, \\ 0 & i > 1. \end{cases}
\end{equation*}
Let \(e_2^m(a_j) = e_2(e_2(\cdots e_2(a_j)))\) where we take the directional derivative in the \(e_2\) direction \(m\) times. 
Then for \(i \geq 2\):
\begin{align*}
T_i(e_2^2(a_j)) &= e_2(T_i(e_2(a_j))) + [T_i, e_2](e_2(a_j)) = 0 - \nabla_{e_2} T_i(e_2(a_j)) \\
&= a_i T_{i-1}(e_2(a_j)) - a_{i+1} T_{i+1}(e_2(a_j)) = \begin{cases} a_2 a_1 a_j \beta & i = 2, \\ 0 & i > 2. \end{cases}
\end{align*}
Repeating this argument for \(T_i(e_2^m(a_j))\), \(i \geq m\), we conclude that
\begin{equation*}
T_i(e_2^m(a_j)) = \begin{cases} a_m a_{m-1} \cdots a_2 a_1 a_j \beta & i = m, \\ 0 & i > m. \end{cases}
\end{equation*}
Note also that \(T_i(e_1(a_j)) = e_1(T_i(a_j)) + [T_i, e_1](a_j) = 0\).

Similar to before, let \(\vv{i,k}\) be the \(\vv{i}\) coordinate of \(g_k^{-1}(q_k)\). 
Suppose \(\vv{1,k}\) is not bounded, so without loss of generality and up to taking a subsequence \(\vv{1,k} \to \infty\) as \(k \to \infty\). 
Then the above computations give that the value of \(e_2(a_j)\) changes linearly in the \(\vv{1}\) direction and does not depend on \(\vv{2}, \dots, \vv{n}\).
Furthermore, this linear change is not constant since \( \beta(g_k^{-1}(q_k)) = \beta(q_k) \neq 0 \) and all \( a_i > 0 \).
In our case we also have \(x\) fixed and we know that the \(\uu_k\) are bounded.
Hence as we take \(\vv{1} \to \infty\), \(e_2(a_j)\) diverges.
But this is impossible as it must have the same value at \(g_k^{-1}(q_k)\) as it does at \(q_k\) and \(q_k \to p_0\), so it must approach the value of \(e_2(a_j)\) at \(p_0\) (up to possibly changing the sign). 
Hence \(\vv{1,k}\) must be bounded.

We now repeat this argument, looking at the value of \(e_2^m(a_j)\) and using that to show that \(\vv{m,k}\) is bounded since the former depends linearly on the latter. 
Furthermore, coordinates \( \vv{1,k}, \dots, \vv{m-1,k} \) are bounded and \( e_2^m(a_j) \) does not depend on \( \vv{m+1,k}, \dots, \vv{n,k} \).
Finally we conclude that all \(\vv{i,k}\) are bounded as well, so \(\pi_1(M)\) must act properly discontinuously on \(\R\) and so is either trivial or \(\Z\). 
\end{proof}

\printbibliography

\end{document}